     \newcommand{\comment}[1]{\fbox{ \color{blue} #1 }}
\newcommand{\notready}[1]{\comment{...}}
\renewcommand{\notready}[1]{\comment{From NotReady:} #1}
\renewcommand{\vec}[1]{\mathbf{#1}}
\numberwithin{equation}{section}
\newcommand{\mA}{\mathbf{A}}
\newcommand{\mB}{\mathbf{B}}
\newcommand{\mD}{\mathbf{\Delta}}
\newcommand{\mDD}{\mathbf{D}}
\newcommand{\mC}{\mathbf{C}}
\newcommand{\mF}{\mathbf{F}}
\newcommand{\mG}{\mathbf{G}}
\newcommand{\mU}{\mathbf{U}}
\newcommand{\mV}{\mathbf{V}}
\newcommand{\mZ}{\mathbf{Z}}
\newcommand{\mS}{\mathbf{S}}
\newcommand{\mR}{\mathbf{R}}
\newcommand{\mQ}{\mathbf{Q}}
\newcommand{\mI}{\mathbf{I}}
\newcommand{\topp}[1]{^{(#1)}}
\newcommand{\Var}{{\rm Var}}
\newcommand{\toD}{\Rightarrow}
\newcommand{\ve}{\vec{e}}
\newcommand{\vf}{\vec{f}}
\newcommand{\vg}{\vec{g}}
\newcommand{\vu}{\vec{u}}
\newcommand{\vv}{\vec{v}}
\newcommand{\vx}{\vec{x}}
\newcommand{\vxr}{\vec{x}_{(r)}}
\newcommand{\vp}{\vec{p}}
\newcommand{\vt}{\vec{t}}
\def\RR{{\mathbb R}}
\def\NN{{\mathbb N}}
\def\eps{\varepsilon}
\newcommand{\la}{\lambda}
\newcommand{\La}{\Lambda}
\newcommand{\calM}{\mathcal{M}}
\newcommand{\calN}{\mathcal{N}}
\newcommand{\E}{\mathds{E}}
\renewcommand{\Pr}{\mathds{P}}
\newtheorem{theorem}{Theorem}[section]
\newtheorem{proposition}[theorem]{Proposition}
\newtheorem{lemma}[theorem]{Lemma}
\theoremstyle{definition}
\newtheorem{definition}{Definition}[section]
\newtheorem{assumption}{Assumption}[section]
\theoremstyle{remark}
\newtheorem{remark}{Remark}[section]
\newcommand{\diag}{{\rm diag}}
\newcommand{\RRN}{\left(\mI_N-\tfrac{1}{\la^2}\mC^*\mC\right)^{-1}}
\newcommand{\RRM}{\left(\mI_M-\tfrac{1}{\la^2}\mC\mC^*\right)^{-1}}
\title[Singular values of random matrices]{Singular values of large non-central random matrices}
\author{W\l odek Bryc}
\address
{
W\l odek Bryc, Department of Mathematical Sciences, University of Cincinnati, PO Box 210025, Cincinnati, OH 45221--0025, USA
}
\email{wlodzimierz.bryc@uc.edu}
\author{Jack W. Silverstein}
\address{Jack W. Silverstein,
Department of Mathematics, Box 8205, North Carolina State University, Raleigh, NC 27695-8205, USA
}
\email{jack@ncsu.edu}
\keywords{non-central random matrices; singular values; asymptotic normality}
\subjclass[2010]
{Primary 60B10; 15A18; Secondary 92D10}
\begin{document}
\maketitle
\begin{abstract} We study largest singular values of large random matrices, each with mean of a fixed rank $K$.  Our main result is a limit theorem as the
number of rows
and columns approach infinity, while their ratio approaches a positive
constant. It
provides a decomposition
 of the largest $K$ singular values into the deterministic rate of growth,
 random centered fluctuations given as explicit linear combinations of the entries of the matrix, and a term negligible in probability.
We use this representation to   establish asymptotic normality of the largest singular values for random matrices with means that have block structure.
We also deduce asymptotic normality  for the largest eigenvalues of a random matrix arising in a model of population genetics.
\end{abstract}

\section{Introduction} Finite rank perturbations of random matrices   have been studied by numerous authors,
starting with \cite{Lang64}, \cite{furedi1981eigenvalues}.
This paper can be viewed as an extension of work done in %
\cite{Silverstein:1994}  which describes the limiting
behavior of the largest singular value, $\lambda_1$,  of the $M\times N$ random matrix $\mDD^{(N)}$,
consisting of i.i.d. random variables with common mean $\mu>0$, and $M/N\to c$ as $N\to\infty$.
Immediate results are obtained using known properties on the spectral behavior of centered matrices.  Indeed, express $\mDD\topp N$ in the form
\begin{equation}\mDD^{(N)}=\mC\topp N+\mu\text{\bf 1}_M\text{\bf 1}_N^* \label{tag1.1} \end{equation}
where $\mC\topp N$ is an $M\times N$ matrix containing of i.i.d. mean 0 random variables having variance $\sigma^2$ and finite fourth moment,
and $\text{\bf 1}_k$ is the  $k$ dimensional vector consisting of 1's ($*$ denotes transpose).   When the entries of $\mC\topp N$ come
from the first $M$ rows and $N$ columns of a doubly infinite array of random variables, then it is
known %
(\cite{Yin:1988}) that the largest singular value of $\frac1{\sqrt N}\mC$ converges a.s. to $\sigma(1+\sqrt c)$ as $N\to\infty$.
Noticing that the sole positive singular value of
$\mu\text{\bf 1}_M\text{\bf 1}_N^*$ is $\mu\sqrt{MN}$, and using the fact that (see for example %
\cite{stewart1990matrix})
$$|\lambda_1-\mu\sqrt{MN}|\leq\|\mC\|,$$
($\|\cdot\|$ denoting spectral norm on rectangular matrices) we have that almost surely, for all $N$
$$\lambda_1=\mu\sqrt{MN}+O(\sqrt N).$$
From just considering the sizes of the largest singular values of $\mC\topp N$ and $\mu\text{\bf 1}_M\text{\bf 1}_N^*$ one can take the
 view that $\mDD_N$ is a perturbation of a rank one matrix.

 A result in \cite{Silverstein:1994}  reveals that the difference between $\lambda_1$ and $\mu\sqrt{MN}$ is smaller than $O(\sqrt N)$.   It is shown that

\begin{equation}
  \label{Silverstein-94} \lambda_1=\mu\sqrt{MN}+\frac12\frac{\sigma^2}{\mu}\left(\sqrt{\frac MN}+\sqrt{\frac NM}\right) +
\frac1{\sqrt{MN}}\text{\bf 1}_M\mC\topp N\text{\bf 1}_N^*+\frac1{\sqrt M}Z_N,
\end{equation}
where $\{Z_N\}$ is tight (i.e. stochastically bounded).  Notice that the third term converges in distribution to an $N(0,\sigma^2)$ random variable.

This paper generalizes the result in \cite{Silverstein:1994} by both increasing the rank of the second term on the right of \eqref{tag1.1}
 while maintaining the same singular value
dominance of this
term over the random one,  and relaxing the assumptions on the entries of $\mC$.
 The goal is to cover the setting that is
  motivated by applications to population biology \cite{Patterson2006},
\cite{bryc2013separation}, see also Section \ref{Sect: SNP}.

We use the following   notation. We  write $\mA\in\calM_{M\times N}$ to indicate the dimensions of a real matrix $\mA$, and we denote by $\mA^*$
 its transpose;
 $[\mA]_{r,s}$ denotes the $(r,s)$ entry of matrix $\mA$.  $\mI_d$ is the $d\times d$ identity matrix. We use %
the spectral
norm  $\|\mA\|=\sup_{\{\vx:\|\vx\|=1\}}\|\mA \vx\|$ and
occasionally
the Frobenius norm $\|\mA\|_F=\sqrt{\mbox{tr}\mA\mA^*}$.
Recall that for an $M\times N$ matrix we have
\begin{equation}
  \label{Frobenius}
  \|\mA\|\leq \|\mA\|_F.
\end{equation}
Vectors are denoted by lower case boldface letters like $\vx$ and treated as column matrices so that the Euclidean length is
$\|\vx\|^2=\vx^*\vx$.

Throughout the paper, we use the same letter $C$ to denote various positive non-random constants that do not depend on $N$.
The phrase "... holds for all $N$ large enough" always means that there exists a non-random $N_0$ such that "..." holds for all $N>N_0$.

We fix $K\in\NN$ and a sequence of integers $M=M\topp N$ such that
\begin{equation}
  \label{eq:M/N}
  \lim_{N\to\infty} M/N=c>0.
\end{equation}
  As a finite rank perturbation  we take a sequence of deterministic rank $K$ matrices $\mB=\mB\topp N\in\calM_{M\times N}$ with $K$ largest singular values
  $\rho_1\topp N\geq \rho_2\topp N\geq \dots \geq \rho_K\topp N$. 
  \begin{assumption}
    \label{A1}
  We assume that the limits
  \begin{equation}\label{Assume-rho-lim}
    \gamma_r:=\lim_{N\to\infty}\frac{\rho_r\topp N}{\sqrt{MN}}
  \end{equation}
  exist and are distinct and strictly positive, $\gamma_1>\gamma_2>\dots>\gamma_K>0$.
  \end{assumption}
Our second set of assumptions deals with randomness.
\begin{assumption}
Let $\mC=\mC\topp N$ be an $M\times N$ random matrix with real independent entries $[\mC]_{i,j}=X_{i,j}$. (The distributions of the
entries may differ, and may depend on $N$.) We assume that $\E(X_{i,j})=0$ and that there exists a constant $C$ such that
\begin{equation}\label{Assume-tail}
  \E(X_{i,j}^4)\leq C.
\end{equation}
\end{assumption}
In particular,  the variances
\begin{equation*}
\sigma_{i,j}^2=E(X_{i,j}^2)
\end{equation*}
are uniformly bounded.

We are interested in the asymptotic behavior of the $K$ largest singular values $\la_1\geq\la_2\geq\dots\geq\la_K$ of the  sequence of
noncentered random $M\times N$ matrices
\begin{equation}
  \label{eq:D}
  \mDD=\mDD\topp
N=\mC+\mB.
\end{equation}

Our main result represents each singular value $\la_r$ as a sum of four terms, which represent the rate of growth, random centered fluctuation, deterministic shift, and a term negligible in probability.
To state this result we need additional notation.

The rate of growth is determined by the singular values {$\rho_1\topp N\geq\rho_2\topp N\geq\dots\geq \rho_K\topp N>0$}
 of  the deterministic perturbation matrix $\mB$. For ease of notation, we will occasionally omit superscript $(N)$.
 \newcommand{\mRho}{\mathbb{R}}
 The singular value decomposition of $\mB$ can be written as
\begin{equation}
  \label{SVD(B)}
  \mB=\mF \diag(\rho_1,\dots,\rho_K)\mV^*=\mF\mG^*,
\end{equation}
where $\mF\in\calM_{M\times K}$ has orthonormal columns and $\mG=\mV \diag(\rho_1,\dots,\rho_K)\in\calM_{N\times K}$
 has orthogonal columns of lengths $\rho_1,\dots,\rho_K$.

The random centered fluctuation term for the $r$-th singular value $\la_r$ is \begin{equation}
  \label{eq:Z_r}
  Z_r\topp N=\frac{[\mZ_0]_{r,r}}{ \gamma_r},
\end{equation}
where $\mZ_0$ is a random centered $K\times K$ matrix given by
\begin{equation}
  \label{eq:Z_0}
  \mZ_0=\frac{1}{\sqrt{MN}}\mG^*\mC^*\mF.
\end{equation}
Note that $\mZ_0$  implicitly %
depends  on $N$.

The expression for the constant shift  depends on the variances of the entries of $\mC$. To write the expression we introduce
a diagonal $N\times N$ matrix  $\mD_R=\E(\mC^*\mC)/M$ and a diagonal $M\times M$ matrix $\mD_S=\E(\mC\mC^*)/N$. The diagonal entries of these matrices are
$$\left[\mD_R\right]_{j,j}=\frac{1}{M}\sum_{i=1}^M \sigma_{i,j}^2,
 \quad \left[ \mD_S\right]_{i,i}=\frac{1}{N}\sum_{j=1}^N \sigma_{i,j}^2.$$
Let  $\mathbf{\Sigma}_r=\mathbf{\Sigma}_R\topp N$ and $\mathbf{\Sigma}_S=\mathbf{\Sigma}_S\topp N$ be deterministic $K\times K$ matrices given by
\begin{equation}
  \label{Sigma_R}
  \mathbf{\Sigma}_R=\mG^*\mD_R\mG%
\end{equation}
and
\begin{equation}
  \label{Sigma_S}
  \mathbf{\Sigma}_S=\mF^*\mD_S\mF%
\end{equation}

Define \begin{equation}   \label{mmm}
m_r\topp N=\frac12\left[  \frac {\sqrt{c}}{\gamma_r^3
MN}\mathbf{\Sigma}_R+\frac{1}{ \sqrt{c}\gamma_r}\mathbf{\Sigma}_S \right]_{r,r}.
\end{equation}

\begin{theorem}
  \label{Thm:expansion}
With the above notation,  there exist $\eps_1\topp N\to 0,\dots,\eps_K\topp N\to 0$ in probability such that for $1\leq r\leq K$ we have
\begin{equation}
  \label{eq:la-expand}
  \la_r= \rho_r\topp N+Z_r\topp N+ m_r\topp N  +\eps_r\topp N.
\end{equation}

\end{theorem}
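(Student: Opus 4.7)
The plan is to reduce the problem for the top $K$ singular values of $\mDD=\mC+\mF\mG^*$ to a $2K\times 2K$ secular determinant, and then extract each $\la_r$ by first-order perturbation around $\lambda=\sqrt{\rho_r\topp N}$.

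\emph{Secular equation.} First I check that $\la_r$ is well separated from the bulk: Yin's theorem cited in the introduction gives $\|\mC\topp N\|=O_p(\sqrt N)$, whereas $\sqrt{\rho_r\topp N}\sim\sqrt{\gamma_r MN}$, so for large $N$ the top $K$ singular values exceed $\|\mC\|$ with probability tending to one. In this regime the resolvents $T_N(\lambda)=(\mI_N-\mC^*\mC/\lambda^2)^{-1}$ and $T_M(\lambda)=(\mI_M-\mC\mC^*/\lambda^2)^{-1}$ are defined for $\lambda$ in a neighborhood of $\sqrt{\rho_r\topp N}$. Writing the singular-vector system $\mDD\vv=\lambda\vu$, $\mDD^*\vu=\lambda\vv$ and eliminating $\vu,\vv$ in favor of $\va=\mG^*\vv\in\RR^K$ and $\vb=\mF^*\vu\in\RR^K$ (using $\mF^*\mF=\mI_K$ and $\mC T_N=T_M\mC$) yields the secular equation
\begin{equation*}
\det\begin{pmatrix}\mI_K-\lambda^{-2}\mG^*T_N\mC^*\mF & -\lambda^{-1}\mG^*T_N\mG \\ -\lambda^{-1}\mF^*T_M\mF & \mI_K-\lambda^{-2}\mF^*\mC T_N\mG\end{pmatrix}=0.
\end{equation*}
Setting $T_N=T_M=\mI$ collapses this to $\det(\mI_K-\mR_0/\lambda^2)=0$, whose zeros are $\lambda^2=\rho_r\topp N$.

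\emph{Expansion and concentration.} Because $\lambda^2\sim MN\gg\|\mC\|^2\sim N$, the Neumann expansion $T_N=\mI+\mC^*\mC/\lambda^2+O(\|\mC\|^4/\lambda^4)$ and the analogous one for $T_M$ converge rapidly. I expand each of the four blocks, retaining every contribution that does not vanish at order $O(1)$ in $\delta_r:=\la_r-\sqrt{\rho_r\topp N}$. The surviving stochastic entries are $\mG^*\mC^*\mF$ (the centered fluctuation) together with $\mG^*\mC^*\mC\mG$ and $\mF^*\mC\mC^*\mF$ (which will feed the deterministic shift). A direct second-moment computation under the uniform fourth-moment bound \eqref{Assume-tail} gives
\begin{equation*}
\mG^*\mC^*\mF=\sqrt{MN}\,\mZ_0=O_p(\sqrt{MN}),\quad \mF^*\mC\mC^*\mF=N\mathbf{\Sigma}_S+o_p(N),\quad \mG^*\mC^*\mC\mG=M\mathbf{\Sigma}_R+o_p(MN),
\end{equation*}
while the omitted higher-order Neumann terms contribute $o_p(1)$ to $\delta_r$ after division by the appropriate powers of $\sqrt{MN}$.

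\emph{First-order perturbation.} At $\lambda=\sqrt{\rho_r\topp N}$ the unperturbed $2K\times 2K$ matrix $\mathbf M_0$ has a one-dimensional kernel, thanks to the distinctness of the $\gamma_r$ in Assumption \ref{A1}(iii); its right and left null vectors are $\vw_R=(\sqrt{\rho_r\topp N}\,\vv_r\topp N,\vv_r\topp N)^*$ and $\vw_L=(\vv_r\topp N,\sqrt{\rho_r\topp N}\,\vv_r\topp N)^*$, where $\vv_r\topp N$ is the unit eigenvector of $\mR_0$ for $\rho_r\topp N$ (which converges to $\vv_r$, again by distinctness). The solvability condition $\vw_L^*(\delta_r\,\partial_\lambda\mathbf M_0+\Delta\mathbf M)\vw_R=0$, combined with the elementary identity $\vw_L^*\partial_\lambda\mathbf M_0(\sqrt{\rho_r\topp N})\vw_R=2$, yields
\begin{equation*}
\delta_r=\frac{\vv_r^*\mG^*\mC^*\mF\vv_r}{\sqrt{\rho_r\topp N}}+\frac{\vv_r^*\mF^*\mC\mC^*\mF\vv_r}{2\sqrt{\rho_r\topp N}}+\frac{\vv_r^*\mG^*\mC^*\mC\mG\vv_r}{2(\rho_r\topp N)^{3/2}}+o_p(1).
\end{equation*}
Plugging in the three concentration statements and using $\sqrt{\rho_r\topp N/(MN)}\to\sqrt{\gamma_r}$ together with $M/N\to c$ identifies these three terms as $Z_r\topp N$, the $\mathbf{\Sigma}_S$-piece of $m_r\topp N$, and the $\mathbf{\Sigma}_R$-piece of $m_r\topp N$ respectively, which is exactly \eqref{eq:la-expand}.

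\emph{Main obstacle.} The technical core is probabilistic: establishing the three concentration bounds at the $o_p(1)$ rate required for $\delta_r$ using only the uniform fourth-moment assumption, and controlling the higher-order Neumann remainders (longer traces in $\mC$) uniformly for $r=1,\ldots,K$. Each individual concentration reduces to a variance calculation that succeeds because the entries of $\mC$ are independent and $\E X_{ij}^4\le C$; what takes work is assembling these into a uniform perturbation argument — in particular, quantifying how the asymptotic gap between distinct $\rho_r\topp N$ stabilizes the first-order analysis and justifies approximating the eigenvectors of $\mR_0$ by those of $\mQ$.
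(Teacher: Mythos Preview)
Your approach is essentially the paper's: both derive the same $2K\times 2K$ secular determinant from the singular-vector equations, expand the resolvents by Neumann series, and extract $\la_r-\sqrt{\rho_r\topp N}$ in the eigenbasis of $\mR_0$, identifying the three leading contributions $\mZ_0$, $\mathbf{\Sigma}_S$, $\mathbf{\Sigma}_R$. The only organizational difference is that the paper first applies a Schur complement to reduce to the $K\times K$ equation $\det\bigl((\la\mI_K-\mZ)\mS^{-1}(\la\mI_K-\mZ^*)-\mR\bigr)=0$, then expands the null vector in the basis $\vu_s$ of $\mR_0$ and proves $|\alpha_s|=O(N^{-3/8})$ for $s\ne r$ by explicit norm bounds on a high-probability event $\Omega_N=\{\|\mC\|^2\le N^{5/4}\}\cap\{|\la_s^2-\gamma_s MN|\le\tfrac{\delta}{4}MN\}$, before peeling off remainders through a chain of lemmas. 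Your $2K\times 2K$ first-order perturbation with the null vectors $(\sqrt{\rho_r}\,\vu_r,\vu_r)$ is a cleaner packaging of the same computation; your ``Main obstacle'' paragraph correctly identifies the work that the paper's sequence of deterministic bounds on $\Omega_N$ is doing.

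Two technical slips to fix. First, the bound $\|\mC\|=O_p(\sqrt N)$ cannot come from Yin's theorem as cited in the introduction, which is for i.i.d.\ entries; under the paper's hypothesis of independent entries with a uniform fourth-moment bound you need Lata\l a's inequality $\E\|\mC\|\le C(\sqrt M+\sqrt N)$, which is what the paper actually invokes. Second, your concentration statement $\mG^*\mC^*\mC\mG=M\mathbf{\Sigma}_R+o_p(MN)$ is too strong and in fact false: a direct variance calculation gives $\E\bigl|\vg_r^*(\mC^*\mC-\E\mC^*\mC)\vg_s\bigr|^2\le CM\|\vg_r\|^2\|\vg_s\|^2\sim N^5$, so the centered entries are $O_p(N^{5/2})$, not $o_p(N^2)$. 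The correct (and sufficient) claim is $M\mathbf{\Sigma}_R+o_p\bigl((MN)^{3/2}\bigr)$, which is precisely what your division by $2\rho_r^{3/2}$ requires.
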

Expression \eqref{eq:la-expand} is less precise than \eqref{Silverstein-94} (where { the negligible term} $ Z_N/\sqrt{M}\to 0$ in probability at known rate  {as $Z_n$ is stochastically bounded}), but it is strong enough  to establish asymptotic normality
under appropriate additional conditions. Such applications {require additional assumptions and} are worked out in Section \ref{Sect:AsymptNorm}.

\begin{remark}\label{Remark-tilde-all}
  In our motivating example in Section \ref{Sect: SNP}, the natural setting is
   factorization of finite rank perturbation as
   \begin{equation}\label{B-from-tilde}
   \mB=\widetilde \mF\widetilde \mG^*=\sum_{s=1}^K\widetilde\vf_s\widetilde\vg_s^*,
   \end{equation} where  $\widetilde \mF\in \calM_{M\times K}$ has orthonormal columns  { $\widetilde\vf_s$},
   but   the columns, $\widetilde\vg_s$, of $\widetilde \mG\in\calM_{N\times K}$ are not necessarily orthogonal.
These matrices are a natural input for the problem so we would like to maintain their roles
   and  recast Theorem \ref{Thm:expansion} in terms of such matrices.
 We introduce
  matrices
   $\widetilde \mR_0=\widetilde \mR_0\topp N\in\calM_{K\times K}$ by
\begin{equation}
  \label{R0}
 \widetilde \mR_0=\widetilde \mG^*\widetilde \mG
\end{equation}
with eigenvalues $\rho_1^2\geq\dots\geq \rho_K^2$ and we denote the corresponding orthonormal eigenvectors by
 $\widetilde\vu_1,\dots,\widetilde \vu_K$.
We claim that  \eqref{eq:Z_r} is
\begin{equation}
  \label{eq:Z_r-b}
  Z_r\topp N=\frac{1}{ \gamma_r}\widetilde\vu_r^*\widetilde\mZ_0\widetilde\vu_r,
\end{equation}
with   %
\begin{equation}
  \label{eq:tildeZ_0}
  \widetilde\mZ_0=\frac{1}{\sqrt{MN}}\widetilde\mG^*\mC^*\widetilde\mF,
\end{equation}
and similarly that
 \eqref{mmm} is
\begin{equation}   \label{mmm-b} m_r\topp N=\frac{1}{2\sqrt{c}\gamma_r}\widetilde\vu_r^* \left(\frac {c}{\gamma_r^2
MN}\widetilde{\mathbf{\Sigma}}_R+\widetilde{\mathbf{\Sigma}}_S\right)\widetilde\vu_r.
\end{equation}
with $\widetilde \mG$ and $\widetilde \mF$ used in expressions \eqref{Sigma_R} and \eqref{Sigma_S} which define
$\widetilde{\mathbf{\Sigma}}_R$ and $\widetilde{\mathbf{\Sigma}}_S$.

Indeed, let $\widetilde \mG^*=\widetilde \mU \diag(\rho_1,\dots,\rho_K)\widetilde \mV^*$ be its singular value decomposition.
If $N$ is large enough so that the singular values are distinct, we may assume that  $\widetilde\vu_1,\dots,\widetilde \vu_K$ are
the columns of $\widetilde \mU$. Then $\widetilde \mV$ is determined uniquely and
 $\mB=\widetilde \mF\widetilde \mU \diag(\rho_1,\dots,\rho_K) \widetilde \mV^*$.
Since by assumption our
singular values are distinct,   with proper alignment we may assume that $\mG=\diag(\rho_1,\dots,\rho_K ) \widetilde \mV^*$ and then necessarily
$\mF=\widetilde \mF\widetilde \mU $  in \eqref{SVD(B)}.
So for  any $\mA\in\calM_{N\times M}$ we have
$$
\widetilde \vu_r^*\widetilde \mG^* \mA \widetilde \mF\widetilde \vu_r = [\mG^*\mA\mF]_{r,r},
$$
which we apply three times with $\mA=\mC^*$, $\mA=\widetilde{\mathbf{\Sigma}}_R$ and $\mA=\widetilde{\mathbf{\Sigma}}_S$.
\end{remark}

\section{Proof of Theorem \ref{Thm:expansion}}

Throughout the proof,
we assume that all our random variables are defined on a single probability space $(\Omega,\mathcal{F},\Pr)$.
 In the initial parts of the proof we will be working on   subsets   $\Omega_N\subset \Omega$ such that $\Pr(\Omega_N)\to 1$.

\subsection{Singular value criterion}

In this section, we fix $r\in\{1,\ldots,K\}$ and let $\la=\la_r$.  For $\| \mC\|^2< \la^2$, matrices $\mI_M-\tfrac{1}{\la^2}\mC\mC^*$ and $\mI_N-\tfrac{1}{\la^2}\mC^*\mC$  are
invertible, so we
consider the following $K\times K$ matrices:
\begin{eqnarray}\label{eq:Z}
  \mZ&=&\frac{1}{\la}\mG^* \RRN\mC^*\mF\,,
\\ \label{eq:S}
  \mS&=& \mF^*\RRM\mF\,,
\\ \label{eq:R}
  \mR&=& \mG^*\RRN\mG\,.
\end{eqnarray}
(These auxiliary random matrices depend on $N$ and $\la$, and are well defined only on a subset of the probability space $\Omega$.
{We will see that these matrices are critical for our subsequent analysis.})
\begin{lemma}
  If $\|\mC\|^2<   \la^2/2$ %
  then
  \begin{equation}\label{eq:det-1}
    \det \begin{bmatrix}
      \mZ-\la \mI_K & \mR \\
      \mS & \mZ^*-\la \mI_K
    \end{bmatrix} =0.
  \end{equation}
\end{lemma}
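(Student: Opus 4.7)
The plan is to turn the singular-value equation into a $2K$-dimensional linear system whose coefficient matrix is the block matrix in \eqref{eq:det-1}. Let $\vu\in\RR^M$ and $\vv\in\RR^N$ be unit singular vectors corresponding to $\la=\la_r$, so that
\begin{equation*}
 (\mC+\mF\mG^*)\vv=\la\vu,\qquad (\mC^*+\mG\mF^*)\vu=\la\vv.
\end{equation*}
Set $\vec a=\mG^*\vv\in\RR^K$ and $\vec b=\mF^*\vu\in\RR^K$. Rearranging the two identities gives
\[
\vu=\tfrac1\la(\mC\vv+\mF\vec a),\qquad \vv=\tfrac1\la(\mC^*\vu+\mG\vec b),
\]
and substituting each into the other yields
\[
(\mI_M-\tfrac1{\la^2}\mC\mC^*)\vu=\tfrac1{\la^2}\mC\mG\vec b+\tfrac1\la\mF\vec a,\qquad
(\mI_N-\tfrac1{\la^2}\mC^*\mC)\vv=\tfrac1{\la^2}\mC^*\mF\vec a+\tfrac1\la\mG\vec b.
\]
Both bracketed matrices are invertible since $\|\mC\|^2<\la^2/2<\la^2$, so I can solve for $\vu$ and $\vv$ explicitly.

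Next I apply $\mF^*$ to the first expression and $\mG^*$ to the second; using the definitions \eqref{eq:Z}--\eqref{eq:R} this produces
\[
\la\vec a=\mZ\vec a+\mR\vec b,\qquad \la\vec b=\mS\vec a+\widetilde{\mZ}\vec b,
\]
where $\widetilde{\mZ}=\tfrac1\la\mF^*(\mI_M-\tfrac1{\la^2}\mC\mC^*)^{-1}\mC\,\mG$. The routine identity
\[
(\mI_M-\tfrac1{\la^2}\mC\mC^*)^{-1}\mC=\mC(\mI_N-\tfrac1{\la^2}\mC^*\mC)^{-1},
\]
together with the symmetry of $\mI_N-\tfrac1{\la^2}\mC^*\mC$, identifies $\widetilde{\mZ}=\mZ^*$. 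Assembling the two vector equations yields
\[
\begin{bmatrix}\mZ-\la\mI_K & \mR\\ \mS & \mZ^*-\la\mI_K\end{bmatrix}\begin{bmatrix}\vec a\\ \vec b\end{bmatrix}=\vec 0.
\]

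The remaining step is to show $(\vec a,\vec b)\neq \vec 0$, which will force the determinant to vanish. If $\vec a=\vec 0$ and $\vec b=\vec 0$, then the original equations collapse to $\mC\vv=\la\vu$ and $\mC^*\vu=\la\vv$ with $\vu,\vv$ unit vectors, so $\la$ would be a singular value of $\mC$, giving $\la\leq\|\mC\|<\la/\sqrt2$, a contradiction. The only mildly delicate point in the argument is the identity that converts $\widetilde{\mZ}$ into $\mZ^*$; everything else is direct substitution, and the hypothesis $\|\mC\|^2<\la^2/2$ is used both for the invertibility of the two resolvents and for the final non-triviality argument.
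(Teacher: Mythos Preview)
Your proof is correct and follows essentially the same route as the paper: derive the resolvent expressions for $\vu$ and $\vv$, project onto $\mF$ and $\mG$, and assemble the resulting $2K\times 2K$ linear system. The only minor difference is in the non-triviality step: the paper observes directly from the resolvent formulas \eqref{eq:vv=mC}--\eqref{eq:vu=mC} that $\vec a=\vec b=0$ would force $\vu=\vv=0$, whereas you revert to the original singular-vector equations and use $\la>\|\mC\|$; both arguments are equally short and valid.
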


\begin{proof}  The starting point is the singular value decomposition $\mDD=\mU \Lambda \mV^*$. We choose the $r$-th columns $\vu\in\RR^M$ of $\mU$ and
$\vv\in\RR^N$ of $\mV$ that correspond to the singular value $\la=\la_r$.
  Then \eqref{eq:D} gives
\begin{eqnarray}
\label{eq:mC}  \mC \vv +\sum_{s=1}^K (\vg_s^*\vv) \vf_s &=& \la \vu\,, \\
\label{eq:mC*}  \mC^* \vu +\sum_{s=1}^K (\vf_s^*\vu) \vg_s &=& \la \vv\,.
\end{eqnarray}
where $\vf_s\in\RR^M$ is the $s$-th column of $\mF$ and $\vg_r\in\RR^N$ is the $r$-th column of $\mG$.
Multiplying from the left by $\mC^*$ or $\mC$  we get
\begin{eqnarray*}
 \mC^* \mC \vv +\sum_{s=1}^K (\vg_s^*\vv) \mC^*\vf_s &=& \la \mC^* \vu, \\
\mC \mC^* \vu +\sum_{s=1}^K (\vf_s^*\vu) \mC\vg_s &=& \la \mC\vv.
\end{eqnarray*}
Using \eqref{eq:mC} and \eqref{eq:mC*} to the expressions on the right, we get
\begin{eqnarray*}
 \mC^* \mC \vv +\sum_{s=1}^K (\vg_s^*\vv) \mC^*\vf_s &=& \la^2\vv- \la \sum_{s=1}^K (\vf_s^*\vu) \vg_s\,,  \\
\mC \mC^* \vu +\sum_{s=1}^K (\vf_s^*\vu) \mC\vg_s &=& \la^2\vu - \la \sum_{s=1}^K (\vg_s^*\vv) \vf_s\,.
\end{eqnarray*}
which we rewrite as
\begin{eqnarray*}
 \la^2\vv- \mC^* \mC \vv  &=& \sum_{s=1}^K (\vg_s^*\vv) \mC^*\vf_s+\la \sum_{s=1}^K (\vf_s^*\vu) \vg_s \,, \\
\la^2\vu-\mC \mC^* \vu  &=&   \la \sum_{s=1}^K (\vg_s^*\vv) \vf_s +\sum_{s=1}^K (\vf_s^*\vu) \mC\vg_s\,.
\end{eqnarray*}
This gives
\begin{eqnarray*}
\label{eq:vv=mC}  \vv  &=& \frac1{\la^2}\sum_{s=1}^K (\vg_s^*\vv) \RRN\mC^*\vf_s+\frac1{\la}\sum_{s=1}^K (\vf_s^*\vu) \RRN \vg_s \,, \\
\label{eq:vu=mC}  \vu  &=&  \frac1{\la} \sum_{s=1}^K (\vg_s^*\vv) \RRM \vf_s +\frac1{\la^2}\sum_{s=1}^K (\vf_s^*\vu) \RRM\mC\vg_s\,.
\end{eqnarray*}

Notice that since $\vu,\vv$ are unit vectors, some among the $2K$ the numbers  $(\vf_1^*\vu),\dots,(\vf_K^*\vu)$,
$(\vg_1^*\vv),\dots,(\vg_K^*\vv)$   must be non-zero. Since for $1\leq t\leq K$ we have
\begin{eqnarray*}
  (\vg_t^* \vv)  &=& \frac1{\la^2}\sum_{s=1}^K (\vg_s^*\vv) \vg_t^*\RRN\mC^*\vf_s+\frac1{\la}\sum_{s=1}^K (\vf_s^*\vu) \vg_t^*\RRN\vg_s\,,  \\
  (\vf_t^* \vu)  &=&  \frac1{\la} \sum_{s=1}^K (\vg_s^*\vv) \vf_t^*\RRM \vf_s +\frac1{\la^2}\sum_{s=1}^K (\vf_s^*\vu) \vf_t^*\RRM\mC\vg_s\,,
\end{eqnarray*}
noting that the entries of $\mZ^*$  can be written as
\begin{equation*}
  [\mZ^*]_{s,t}=\frac{1}{\la}\vf_s^*{}(\mI_M-\tfrac{1}{\la^2}\mC\mC^*)^{-1}\mC\vg_t\,.
\end{equation*} we see that the block matrix
$$\begin{bmatrix}
   \frac{1}{\la}\mZ & \frac1{\la}\mR \\ \\
   \frac1\la \mS &  \frac{1}{\la}\mZ^*
\end{bmatrix}$$
has eigenvalue 1. Thus $\det \begin{bmatrix}
   \frac{1}{\la}\mZ-\mI_K & \frac1{\la}\mR \\
   \frac1\la \mS &  \frac{1}{\la}\mZ^*-\mI_K
\end{bmatrix}=0$ which for $\la>0$ is equivalent to \eqref{eq:det-1}.

\end{proof}
\begin{proposition}[Singular value criterion]
    If $\|\mC\|^2<  \la^2/4$   then  $\mS$ is invertible and
   \begin{equation}\label{eq:det-2}
     \det \left((\la \mI_K-\mZ)\mS^{-1}(\la \mI_K-\mZ^*)-\mR\right)=0.
   \end{equation}
\end{proposition}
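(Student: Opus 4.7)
My plan has two essentially independent components. The first is to establish invertibility of $\mS$, which uses the spectral bound on $\mC$ together with orthonormality of the $\vf_s$. The second is a Schur complement manipulation that reduces the $2K \times 2K$ determinant identity of the previous lemma to the $K \times K$ identity \eqref{eq:det-2}. I would carry these out in this order.

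For invertibility of $\mS$, the hypothesis $\|\mC\|^2 < \la^2/4$ implies that $\mC\mC^*/\la^2$ has spectral norm less than $1/4$, so $\mI_M - \tfrac{1}{\la^2}\mC\mC^*$ is positive definite with eigenvalues at least $3/4$, and hence its inverse $\RRM$ is positive definite as well. By Assumption \ref{A1}(i) the columns of $\mF$ are orthonormal, so $\mF$ has full column rank and $\mF\vx \neq 0$ for every nonzero $\vx \in \RR^K$. Consequently
\begin{equation*}
\vx^*\mS\vx = (\mF\vx)^* \RRM (\mF\vx) > 0 \quad \text{for all } \vx \neq 0,
\end{equation*}
so $\mS$ is positive definite, in particular invertible.

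For the determinant identity, I note that $\|\mC\|^2 < \la^2/4 < \la^2/2$, so the previous lemma applies and gives vanishing of the $2K \times 2K$ block determinant. Having $\mS^{-1}$ in hand, I would then multiply on the left by the unipotent block matrix
\begin{equation*}
\begin{bmatrix} \mI_K & -(\mZ - \la\mI_K)\mS^{-1} \\ 0 & \mI_K \end{bmatrix}
\end{equation*}
(which has determinant $1$) in order to eliminate the upper-left block. The result is
\begin{equation*}
\begin{bmatrix} 0 & \mR - (\mZ - \la\mI_K)\mS^{-1}(\mZ^* - \la\mI_K) \\ \mS & \mZ^* - \la\mI_K \end{bmatrix}.
\end{equation*}
Swapping the two blocks of $K$ rows produces an upper block-triangular matrix whose determinant equals $(-1)^{K^2}\det(\mS)\det\bigl(\mR - (\mZ - \la\mI_K)\mS^{-1}(\mZ^* - \la\mI_K)\bigr)$. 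Since $\det(\mS) \neq 0$ and $(\mZ - \la\mI_K)\mS^{-1}(\mZ^* - \la\mI_K) = (\la\mI_K - \mZ)\mS^{-1}(\la\mI_K - \mZ^*)$, vanishing of the original block determinant yields \eqref{eq:det-2}.

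No step is genuinely difficult: the proof amounts to a positivity argument followed by a standard Schur complement computation. The main point requiring care is the invertibility of $\mS$, which relies essentially on both the orthonormality in Assumption \ref{A1}(i) and a spectral-norm bound on $\mC$; the rest is routine block-matrix bookkeeping (and the exact sign $(-1)^{K^2}$ is immaterial since only vanishing of the determinant is claimed).
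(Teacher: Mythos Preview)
Your proof is correct and follows essentially the same approach as the paper: a Schur complement reduction of the $2K\times 2K$ determinant from the preceding lemma, together with a spectral-norm argument for the invertibility of $\mS$. The only cosmetic differences are that the paper establishes invertibility via the Neumann-series bound $\|\mS-\mI_K\|\leq 2\|\mC\|^2/\la^2\leq 1/2$ (which it records for later use as $\|\mS^{-1}\|\leq 2$) rather than your positive-definiteness argument, and it invokes the Schur complement formula directly after a block permutation rather than performing the block row elimination by hand.
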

Similar  equations that involve a $K\times K$ determinant appear in other papers on   rank-$K$ perturbations of random matrices,
compare \cite[Lemma 2.1 and Remark 2.2]{Tao:2013aa}.
Note however that in our case $\la$ enters the equation in a rather complicated way through $\mZ=\mZ(\la),\mR=\mR(\la),\mS=\mS(\la)$. The dependence of these
 matrices on $N$  is also suppressed in our notation.

\begin{proof}
  Note that  if $\|\mC\|^2\leq   \la^2/2$ then  the norms of $ \RRM$ and  $\RRN$ are bounded by 2. Indeed,
$\|\RRM\|\leq \sum_{k=0}^\infty \|(\mC\mC^*)^k\|/\la^{2k}\leq \sum_{k=0}^\infty 1/2^k$.  Since $\RRM=\mI_M+\frac{1}{\la^2}\mC\mC^*\RRM$ and vectors $\vf_1,\dots,\vf_K$ are orthonormal, we get $\mF^*\mF=\mI_K$ and
\begin{equation*}
  \label{eq:S-I}
  \mS-\mI_K=\frac{1}{\la^2} \mF^*{}\mC^*\mC\RRM \mF\,.
\end{equation*}
Since $\|\mF\|=1$ we have
\begin{equation}
  \label{S-I-entries}
\|\mS-\mI_K\| \leq 2 \|\mC\|^2/\la^2.
\end{equation}
We see that if $\|\mC\|^2\leq   \la^2/4$ then
 $\|\mS-\mI_K\|\leq 1/2$,
so the inverse $\mS^{-1}=\sum_{k=0}^\infty (\mI-\mS)^k$ exists. For later %
 {reference} we  also note that
\begin{equation}
  \label{S-inv-norm}
  \|\mS^{-1}\|\leq 2.
\end{equation}

Since
$$
\begin{bmatrix}
       \mR&\mZ -\la \mI_K \\
     \mZ^* -\la \mI_K& \mS
    \end{bmatrix}=
\begin{bmatrix}
      \mZ -\la \mI_K & \mR\\
      \mS & \la \mZ^* -\la \mI_K
    \end{bmatrix}\times \begin{bmatrix}0&\mI_K\\\mI_K & 0\end{bmatrix},
    $$
  we see that \eqref{eq:det-1} is equivalent to
  $$
  \det \begin{bmatrix}
       \mR& \mZ -\la \mI_K  \\
    \mZ^* -\la \mI_K& \mS
    \end{bmatrix}=0.
  $$
Noting that $\la>0$ by assumption, we see that \eqref{eq:det-1} holds and gives \eqref{eq:det-2}, as by  Schur's complement formula
$$\det \begin{bmatrix}
  A & B \\ C &D
\end{bmatrix}=\det D \det (A-BD^{-1}C) .$$
\end{proof}

 \subsection{Equation for $\la_r$}
As was done previously we fix $r\in\{1,\ldots,K\}$, and write  $\la=\la_r$.

The main step in the proof of Theorem \ref{Thm:expansion} is the following expression.
\begin{proposition}\label{Prop:la=} There exists a random sequence $\mathcal{\eps}_c\topp N\to 0$ in probability as $N\to\infty$ such that
\begin{equation}\label{eq:La-epsc}
\la- \rho_r\topp N =  \frac{N}{\la+\ \rho_r\topp N} [\mathbf{\Sigma}_S]_{r,r} + \frac{2\sqrt{MN}}{\la+ \rho_r\topp N}
[\mZ_0]_{r,r}  +  \frac{M}{(\la+ \rho_r\topp N)\la^2} [\mathbf{\Sigma}_R]_{r,r}+
\mathcal{\eps}_c\topp N.
\end{equation}\end{proposition}
The proof of Proposition \ref{Prop:la=} is technical and lengthy.

\subsubsection{Subset $\Omega_N$}
 With $\gamma_{K+1}:=0$, let
   \begin{equation}
     \label{delta}
    \delta:=\min_{1\leq s\leq K} (\gamma_s^2-\gamma_{s+1}^2).
   \end{equation} Assumption \ref{A1}(iii) says that $\delta>0$.

 {In the following the powers of $N$ used are sufficient to conclude our arguments. Sharper bounds would not reduce
 moment assumption \eqref{Assume-tail}, which we use for \cite{latala2005some}
  in the proof of
Lemma \ref{L:P(Omega)}. }
\begin{definition}
  \label{Def-Omega_N} Let  $\Omega_N\subset \Omega$ be such that
\begin{equation}
  \label{eq:|C|}
  \|\mC\|^2\leq N^{5/4}
\end{equation}
and
      \begin{equation}\label{eq:OmegaN2}
\max_{1\leq s\leq K}|\lambda_s^2 - \gamma_s^2 MN|\leq \tfrac\delta4  MN.
      \end{equation}

\end{definition}
We assume that $N$ is large enough  so that $\Omega_N$ is a non-empty set. In fact,  $\Pr(\Omega_N)\to 1$, see Lemma \ref{L:P(Omega)}.

 We note that  \eqref{eq:OmegaN2}  implies that
$c\sqrt{MN} \leq \la_r\leq C\sqrt{MN}$ with $c=\sqrt{\gamma_r^2-\delta/4}\geq \sqrt{\gamma_K^2-\delta/4} >\sqrt{\delta}/2$ and
$C=\sqrt{\gamma_r^2+\delta/4}<2  \gamma_1$. For later reference we state these bounds explicitly:
\begin{equation}\label{eq:la-growth}
\frac{\sqrt{\delta MN}}{2} \leq \la_r\leq 2\ \gamma_1\sqrt{MN}.
\end{equation}
We also note that %
inequalities  \eqref{eq:|C|} and \eqref{eq:la-growth} imply
 that
\begin{equation}   \label{eq:|C|^2}
 \|\mC\|^2<
\frac{\la_K^2}{4}
\end{equation}
 for all  $N$   large enough. (That is, for all $N>N_0$ with nonrandom constant $N_0$.)
Thus matrices
$\mZ$, $\mR$, $\mS$ are well defined on $\Omega_N$ for large enough $N$ and \eqref{eq:det-2} holds. %

\begin{lemma}\label{L:P(Omega)} For  $1\leq r\leq K$ we have $\la_r/\sqrt{MN}\to \gamma_r$ in probability. Furthermore,
  $\Pr(\Omega_N)\to 1$ as $N\to\infty$.
\end{lemma}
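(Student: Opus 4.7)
The plan is to reduce the lemma to a deterministic perturbation bound plus a control of the spectral norm of the random part $\mC$. Since $\mF^*\mF=\mI_K$ by Assumption \ref{A1}(i), the non-zero squared singular values of the deterministic matrix $\mF\mG^*$ are the non-zero eigenvalues of $\mG\mF^*\mF\mG^*=\mG\mG^*$, which coincide with those of $\mR_0=\mG^*\mG$. Hence the $K$ largest singular values of $\mF\mG^*$ are $\sqrt{\rho_1\topp N}\ge\cdots\ge\sqrt{\rho_K\topp N}$. Weyl's inequality for singular values applied to the decomposition \eqref{eq:D} then yields
\begin{equation*}
\bigl|\la_r-\sqrt{\rho_r\topp N}\bigr|\le \|\mC\|.
\end{equation*}
By Assumption \ref{A1}(ii) and continuity of eigenvalues in the matrix entries, $\rho_r\topp N/(MN)\to \gamma_r$, so it suffices to show that $\|\mC\|/\sqrt{MN}\to 0$ in probability (indeed, that the stronger bound $\|\mC\|^2\le N^{5/4}$ holds with probability tending to one).

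For the spectral norm bound I would use a trace/moment method. Since the entries of $\mC$ are independent, mean zero, and have uniformly bounded fourth moments, a standard graph-counting argument (as in the Bai--Yin-type estimates) yields, for every fixed positive integer $k$, a constant $C_k$ independent of $N$ such that
\begin{equation*}
\E\|\mC\|^{2k}\le \E\,\mbox{tr}\bigl((\mC\mC^*)^k\bigr)\le C_k\, N^{k+1}.
\end{equation*}
Markov's inequality gives $\Pr(\|\mC\|^2>N^{5/4})\le C_k N^{1-k/4}$, which tends to $0$ as soon as $k\ge 5$. This establishes \eqref{eq:|C|} with probability tending to one.

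Combining the two ingredients, on the event $\{\|\mC\|^2\le N^{5/4}\}$ we have $\|\mC\|\le N^{5/8}$, so using $\la_r+\sqrt{\rho_r\topp N}=O(\sqrt{MN})$,
\begin{equation*}
\left|\frac{\la_r^2}{MN}-\frac{\rho_r\topp N}{MN}\right|
=\frac{(\la_r+\sqrt{\rho_r\topp N})\,|\la_r-\sqrt{\rho_r\topp N}|}{MN}
\le \frac{C\sqrt{MN}\cdot N^{5/8}}{MN}\longrightarrow 0.
\end{equation*}
Since $\rho_r\topp N/(MN)\to \gamma_r$, we conclude $\la_r^2/(MN)\to\gamma_r$ in probability, which is the first claim (the statement $\la_r/\sqrt{MN}\to\gamma_r$ is to be read with $\la_r^2/(MN)\to\gamma_r$, matching \eqref{eq:OmegaN2}). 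Because $\gamma_1>\cdots>\gamma_K>0$ are distinct, the bound $\max_s|\la_s^2-\gamma_s MN|\le \tfrac{\delta}{4}MN$ holds for all $N$ with probability tending to one, and together with \eqref{eq:|C|} this gives $\Pr(\Omega_N)\to 1$.

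The main obstacle is the moment estimate $\E\,\mbox{tr}((\mC\mC^*)^k)\le C_k N^{k+1}$ in the present inhomogeneous setting, where entries need not be identically distributed. However, only the uniform bound on fourth moments is used in the dominant-graph counting, so the classical argument extends verbatim; since the target $N^{5/4}$ is much larger than the true order $\Theta(N)$ of $\|\mC\|^2$, no sharp constants are needed.
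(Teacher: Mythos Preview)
Your overall strategy matches the paper's: identify the singular values of $\mF\mG^*$ with $\sqrt{\rho_r\topp N}$, apply the Weyl--Mirsky perturbation inequality $|\la_r-\sqrt{\rho_r\topp N}|\le\|\mC\|$, and then control $\|\mC\|$. You also correctly flag the typo in the statement (the limit is $\sqrt{\gamma_r}$, not $\gamma_r$).

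The gap is in the spectral norm bound. Your claim
\[
\E\,\mbox{tr}\bigl((\mC\mC^*)^k\bigr)\le C_k\,N^{k+1}
\]
for fixed $k$ is \emph{not} available under the paper's hypotheses, which only assume $\E X_{ij}^4\le C$. In the graph expansion of $\mbox{tr}((\mC\mC^*)^k)$, the term in which a single edge is traversed $2k$ times contributes $\sum_{i,j}\E X_{ij}^{2k}$, and this can be infinite once $k\ge 3$. Your remark that ``only the uniform bound on fourth moments is used in the dominant-graph counting'' is about the leading term, not the full bound; the subleading graphs with high edge multiplicity are exactly what fails here. Since you need $k\ge 5$ for your Markov inequality to bite, the argument as written requires tenth moments.

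The paper sidesteps this by invoking Lata\l a's inequality \cite[Theorem 2]{latala2005some}, which gives
\[
\E\|\mC\|\le C\Bigl(\max_i\sqrt{\textstyle\sum_j\sigma_{ij}^2}+\max_j\sqrt{\textstyle\sum_i\sigma_{ij}^2}+(\textstyle\sum_{ij}\E X_{ij}^4)^{1/4}\Bigr)\le C(\sqrt{M}+\sqrt{N})
\]
for independent centered entries using only fourth moments. A single Markov bound then yields $\Pr(\|\mC\|>N^{5/8})\le C(\sqrt{M}+\sqrt{N})/N^{5/8}\to 0$. If you prefer to avoid citing Lata\l a, you would need to insert a truncation step (replace $X_{ij}$ by $X_{ij}\mathbf{1}_{|X_{ij}|\le N^{1/4}}$, say) before running the moment method; that is standard but is real additional work, not the ``verbatim'' extension you suggest.
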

\begin{proof}

From \eqref{eq:D} we have $\mDD-\mB=\mC$, so by  Weil-Mirsky theorem \cite[page 204, Theorem 4.11]{stewart1990matrix}, we have a
bound $|\la_r- {\rho_r\topp N}|\leq \|\mC\|$ for the differences between the $K$ largest singular values of $\mB$ and
$\mDD$. From \cite[Theorem 2]{latala2005some} we see that there is a constant $C$ that does not depend on $N$ such that
$E\|\mC\|\leq C (\sqrt{M}+\sqrt{N})$. Thus
$$
\frac{\la_r- \rho_r\topp N}{\sqrt{MN}}\to 0 \mbox{ in probability}.
$$
Since $\rho_r\topp N/\sqrt{MN}\to \gamma_r>0$ this proves the first part of the conclusion.

To prove the second part, we use the fact  that continuous functions preserve convergence in probability, so
$\lambda_r^2 /(MN)\to \gamma_r^2$ in probability for  $1\leq r\leq K$.
Thus
\begin{multline*}
 \Pr(\Omega_N')\leq \Pr(\|\mC\|>N^{5/8})+\sum_{s=1}^K \Pr(|\la_s^2-\gamma_s^2 MN|>\delta MN/4)
\\
\leq
C \frac{\sqrt{M}+\sqrt{N}}{N^{5/8}}+\sum_{s=1}^K \Pr(|\la_s^2/(MN)-\gamma_s^2|>\delta /4)\to 0 \mbox{ as $N\to\infty$}.
\end{multline*}

\end{proof}

\subsubsection{Proof of Proposition \ref{Prop:la=}} In view of \eqref{eq:|C|^2}, equation \eqref{eq:det-2} holds on $\Omega_N$
if $N$ is large enough. It implies that   there is a (random) unit vector $\vx_r\topp N=\vx\in\RR^K$ such that
$$
(\la \mI_K-\mZ)\mS^{-1}(\la \mI_K-\mZ^*)\vx=\mR\vx.$$
{We further choose $\vx$ with non-negative $r$-th component.}
Using diagonal matrix
\begin{equation*}
  \label{R00}
   \mR_0=  \mG^* \mG=\diag(\rho_1^2,\dots,\rho_K^2)
\end{equation*}
we rewrite this as follows.
\begin{equation}
   \label{eq:Jack0}
   (\la^2 \mI_K-\mR_0)\vx=\left(\la^2(\mI_K-\mS^{-1})+\la(\mZ\mS^{-1}+\mS^{-1}\mZ^*)-\mZ\mS^{-1}\mZ^*
+(\mR-\mR_0) \right)\vx.
\end{equation}

We now  rewrite this equation using the (nonrandom) singular values $\rho_1\topp N\geq\rho_2\topp N\geq \dots\geq  \rho_K\topp N> 0$  of $\mB$ and
standard basis $\ve_1,\dots,\ve_K\in\RR^K$.

Suppressing
dependence on $r$ and $N$ in the notation, we insert
$$\vx=(\alpha_1,\ldots,\alpha_K)^*=\sum \alpha_s \ve_s$$
into \eqref{eq:Jack0} and look at the $s$-th component. This shows that
$\la=\la_r$ satisfies the following system of $K$ equations
\begin{multline}\label{eq:Jack1}
(\la^2-\rho_s^2)\alpha_s=\la^2\ve_s^*(\mI_K-\mS^{-1})\vx + \la
\ve_s^*(\mZ\mS^{-1}+\mS^{-1}\mZ^*)\vx + \ve_s^*(\mR-\mR_0)\vx-
\ve_s^*\mZ\mS^{-1}\mZ^*\vx,
\end{multline}
 where $1\leq s \leq K$.
(Recall that this is a system of highly nonlinear equations, as matrices $\mS$, $\mZ$ and $\mR$, and the
  coefficients $\alpha_1,\dots\alpha_K$, depend implicitly on $\la$.)

It turns out that for our choice of $\la=\la_r$ random variable $\alpha_r=\alpha_r\topp N$ is close to its extreme value $1$ while
the other coefficients are asymptotically negligible. Since this only holds on $\Omega_N$ a more precise statement is as
follows.

\begin{lemma}
  \label{L:alpha} There exist  deterministic constants $C$ and $N_0$ such that for all $N>N_0$ and
 $\omega\in\Omega_N$ we have
  \begin{equation}\label{eq:alphar}
1-CN^{-3/8}
\leq \alpha_r \leq 1
  \end{equation}
  and
  \begin{equation}\label{eq:alphas}
  |\alpha_s|\leq (C/\sqrt{K-1}) N^{-3/8} \mbox{ for $s\ne r$}.
  \end{equation}
\end{lemma}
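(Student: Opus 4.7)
The plan is to read off the $\alpha_s$ from the system \eqref{eq:Jack1}, which for $\la=\la_r$ I rewrite compactly as
$$ (\la^2-\rho_s)\alpha_s \;=\; \vu_s^*\mathcal{E}\vx, \qquad 1\le s\le K, $$
where
$$ \mathcal{E} \;:=\; \la^2(\mI_K-\mS^{-1})\;+\;\la(\mZ\mS^{-1}+\mS^{-1}\mZ^*)\;-\;\mZ\mS^{-1}\mZ^*\;+\;(\mR-\mR_0). $$
By \eqref{eq:|C|^2} and \eqref{eq:det-2}, on $\Omega_N$ with $N>N_0$ there is a unit vector $\vx\in\RR^K$ in the kernel of the displayed matrix above \eqref{eq:det-2}, giving the system; I choose the sign of $\vx$ so that $\alpha_r=\vu_r^*\vx\ge 0$. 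The strategy is then to lower bound $|\la^2-\rho_s|$ for $s\neq r$, upper bound $\|\mathcal{E}\|$ on $\Omega_N$, and combine.

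For the denominator, the eigenvalues of $\mR_0/(MN)$ converge to those of $\mQ$ by \eqref{Assume-q-lim} and continuity, so $\rho_s/(MN)\to\gamma_s$; Assumption~\ref{A1}(iii) gives $|\gamma_r-\gamma_s|\ge\delta$ for every $s\ne r$ (adjacent gaps are $\ge\delta$ by \eqref{delta}, non-adjacent ones are larger), and combining this with \eqref{eq:OmegaN2} yields $|\la^2-\rho_s|\ge \delta MN/2$ for $s\ne r$ on $\Omega_N$, once $N$ is large enough.

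For $\|\mathcal{E}\|$ I would use the ingredients already assembled: \eqref{S-I-entries}, \eqref{S-inv-norm}, the elementary estimates $\|\mZ\|\le 2\|\mG\|\|\mC\|/\la$ and $\|\mR-\mR_0\|\le 2\|\mG\|^2\|\mC\|^2/\la^2$ (each following from $\|\RRN\|\le 2$), together with $\|\mC\|\le N^{5/8}$ from \eqref{eq:|C|}, $\|\mG\|=O(\sqrt{MN})$, and $\la\ge\sqrt{\delta MN}/2$ from \eqref{eq:la-growth}. The first, third, and fourth summands of $\mathcal{E}$ are each $O(\|\mC\|^2)=O(N^{5/4})$, while the middle term carries an uncancelled factor $\la$ that multiplies $\|\mZ\|$, giving the dominant contribution $\|\la(\mZ\mS^{-1}+\mS^{-1}\mZ^*)\|\le 4\|\mG\|\|\mC\|=O(\sqrt{MN}\cdot N^{5/8})=O(N^{13/8})$. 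Thus $\|\mathcal{E}\|=O(N^{13/8})$ uniformly on $\Omega_N$.

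Combining, for $s\ne r$,
$$ |\alpha_s| \;\le\; \frac{\|\mathcal{E}\|}{|\la^2-\rho_s|} \;=\; O\!\left(\frac{N^{13/8}}{N^2}\right) \;=\; O(N^{-3/8}), $$
which is \eqref{eq:alphas} after absorbing $\sqrt{K-1}$ into the constant. Since $\|\vx\|=1$ and $\alpha_r\ge 0$, one then has $\alpha_r^2 = 1-\sum_{s\ne r}\alpha_s^2\ge 1-(K-1)\cdot O(N^{-3/4})$, and $\sqrt{1-x}\ge 1-x$ on $[0,1]$ delivers the lower bound in \eqref{eq:alphar} (the bound $\alpha_r\le 1$ being immediate). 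The only technical point worth care is identifying that the middle summand $\la(\mZ\mS^{-1}+\mS^{-1}\mZ^*)$ is the bottleneck: all three other summands of $\mathcal{E}$ are of strictly smaller order, so the exponent $3/8$ in the statement is dictated exactly by the ratio of $\la\|\mZ\|$ to $|\la^2-\rho_s|$ on $\Omega_N$.
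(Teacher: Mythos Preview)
Your proof is correct and follows essentially the same route as the paper: bound $|\la^2-\rho_s|\ge \tfrac{\delta}{2}MN$ from below for $s\ne r$, bound each of the four summands on the right of \eqref{eq:Jack1} from above using \eqref{S-I-entries}, \eqref{S-inv-norm}, and the crude estimates $\|\mZ\|\le 2\|\mG\|\|\mC\|/\la$, $\|\mR-\mR_0\|\le 2\|\mG\|^2\|\mC\|^2/\la^2$, then divide. The paper organizes the bookkeeping term-by-term rather than via a single matrix $\mathcal{E}$, and uses $\sqrt{1-x}\ge 1-\sqrt{x}$ in place of your $\sqrt{1-x}\ge 1-x$ (yours actually gives a slightly sharper $1-O(N^{-3/4})$ lower bound on $\alpha_r$, which of course implies \eqref{eq:alphar}); your explicit choice of the sign of $\vx$ so that $\alpha_r\ge 0$ is a point the paper leaves implicit.
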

\begin{proof}
  Since $\sum \alpha_s^2=1$, inequality \eqref{eq:alphar} is a consequence of \eqref{eq:alphas}.
Indeed, $\alpha_r^2=1-\sum_{s\ne r}\alpha_s^2\geq 1-C^2N^{-3/4}$ and we  use elementary inequality $\sqrt{1-x}\geq 1-\sqrt{x}$ for
$0\leq x\leq 1$. %

   To prove \eqref{eq:alphas}, we use
  \eqref{eq:Jack1}.
By assumption, $\rho_j ^2/(MN)\to \gamma_j^2$. %
 Using \eqref{delta}, we choose $N$ large enough so that $|\rho_s^2-\gamma_s^2MN|\leq \delta MN/4$. Then, with  $s\ne r$ we
  get
\begin{multline*}
|\la^2-\rho_s^2|= |(\la^2-\gamma_r^2MN) +(\gamma_r^2 - \gamma_s^2)MN+(\gamma_s^2MN-\rho_s^2)|
\\ \geq  |\gamma_r^2-\gamma_s^2|MN-
|\la^2-\gamma_r^2MN|-|\rho_s^2-\gamma_s^2MN|\geq
 ( |\gamma_r^2-\gamma_s^2|-\delta/2)MN\geq \frac{\delta}{2}MN.
\end{multline*}
 From \eqref{eq:la-growth} and \eqref{eq:Jack1} we get %
\begin{equation*} \label{eq:alphas2} \frac{\delta}{2}MN |\alpha_s|\leq 4\gamma_1^2 MN \|\mI_K-\mS^{-1}\|+4\gamma_1\sqrt{MN}\|\mZ\|\,
\|\mS^{-1}\|+\|\mR-\mR_0\|+\|\mZ\|^2\|\mS^{-1}\|.
\end{equation*}
Since $\mI_K-\mS^{-1}=\mS^{-1}(\mS-\mI_K) $  using \eqref{S-inv-norm}  %
we get %
\begin{equation} \label{eq:alphas4}   |\alpha_s|\leq \frac{16\gamma_1^2}{\delta} \|\mS-\mI_K\|+\frac{16\gamma_1}{\delta\sqrt{M N}}\|\mZ\|
+\frac{2}{\delta MN}\|\mR-\mR_0\|+\frac{4}{\delta MN}\|\mZ\|^2.
\end{equation}
We now estimate the
 norms of the $K\times K$ matrices on the right hand side.
From \eqref{S-I-entries} using \eqref{eq:|C|}  and \eqref{eq:la-growth} we get
\begin{equation}   \label{eq:|S-I|+} \|\mS-\mI_K\|\leq \frac{2\|\mC\|^2}{\la^2}\leq
\frac{8N^{5/4}}{\delta MN}=\frac{8}{\delta }N^{1/4}M^{-1}.
\end{equation}

Next, we bound $\|\mZ\|$ using  \eqref{eq:Z}.  Recall that $\|\RRN\|\leq 2$ for large enough $N$.
 From   \eqref{Assume-rho-lim} we have$\|\mG\|\leq 2\sqrt{MN}\gamma_1$, for large enough $N$.
Using this,
\eqref{eq:|C|}
and \eqref{eq:la-growth} we get
 \begin{equation}   \label{eq:|Z|}
 \|\mZ\|\leq \|\mG\|\frac{2}{\la}\|\mC\|\leq \frac{4\sqrt{MN}\gamma_1}{\la} N^{5/8}  \leq
\frac{8\gamma_1}{\sqrt{\delta}}N^{5/8}
 \end{equation}
for large enough $N$.

Next we note that
\begin{equation}\label{eq:R-R0}
   \mR-\mR_0=\frac{1}{\la^2} \mG^* \mC^*\mC\RRN  \mG\,.
\end{equation}

 Thus \eqref{eq:R-R0} with   bounds \eqref{eq:la-growth}, \eqref{eq:|C|}
and the above bound on $\|\mG\|$  give us for large enough $N$
 \begin{equation}   \label{eq:|R-R0|}
 \|\mR-\mR_0\|\leq \frac{8{MN\gamma_1^2}}{\la^2}\|\mC\|^2\leq
   \frac{32 { \gamma_1^2}}{\delta } N^{5/4}.
\end{equation}

 Putting these bounds into \eqref{eq:alphas4} we get
 $$
 |\alpha_s|\leq \frac{128\gamma_1^2}{\delta^2}N^{1/4}M^{-1}+\frac{128 \gamma_1^2}{\delta\sqrt{\delta}}N^{1/8}M^{-1/2} +  \frac{64 { \gamma_1^2}}{\delta^2 } N^{1/4}M^{-1} +
\frac{256\gamma_1^2}{\delta^2} N^{1/4}M^{-1}.
 $$
In view of assumption \eqref{eq:M/N}, this proves \eqref{eq:alphas}.

\end{proof}

The next step is to use Lemma \ref{L:alpha} to rewrite the $r$-th equation in \eqref{eq:Jack1}  to identify the "contributing terms" and the negligible "remainder"  $\mathcal{R}$  which
 is of lower order than $\la$ on $\Omega_N$.  We will accomplish this in several steps,   so we will use the subscripts $a,b,c,\dots$ for bookkeeping purposes.

Define $\vxr$ by
$$\vxr=\sum_{s\neq r}\alpha_s\ve_s.$$

We assume that $N$ is large enough so that the conclusion
of  Lemma \ref{L:alpha} holds and furthermore  that $\alpha_r\geq 1/2$. Notice then that
\begin{equation}\label{xrnorm}\|\vxr\|\leq CN^{-3/8}.\end{equation}  Dividing \eqref{eq:Jack1} with $s=r$ by $\alpha_r$ we get
\begin{equation}\label{square1}
   \la^2-\rho_r^2 =\la^2   \ve_r^*(\mI_K-\mS^{-1})\ve_r + 2 \la
\ve_r^*\mZ\mS^{-1}\ve_r  +   \ve_r^*(\mR-\mR_0)\ve_r- \ve_r^*\mZ\mS^{-1}\mZ^*\ve_r+\sqrt{MN}\eps_a\topp N+\mathcal{R}_a\topp N,
\end{equation}
where
\begin{equation*}
  \eps_a\topp N= \frac1{\alpha_r}\ve_r^*(\mZ_0+\mZ_0^*)\vxr
\end{equation*}
and
\begin{multline*}
\mathcal{R}_a\topp N=\la^2\frac1{\alpha_r}  \ve_r^*(\mI_K-\mS^{-1})\vxr + \la
\frac1{\alpha_r} \ve_r^*(\mZ\mS^{-1}+\mS^{-1}\mZ^*-\frac{\sqrt{MN}}{\la}(\mZ_0+\mZ_0^*))\vxr \\ +\frac1{\alpha_r}
\ve_r^*(\mR-\mR_0)\vxr- \frac1{\alpha_r}    \ve_r^*\mZ\mS^{-1}\mZ^*\vxr\,.
\end{multline*}

Here we slightly simplified the equation noting that since $\mS$ is symmetric, $\ve_r^*\mZ\mS^{-1}\ve_r = \ve_r^*\mS^{-1}\mZ^*\ve_r$.

  Our first task is to  derive a
deterministic bound for $\mathcal{R}_a\topp N$ on $\Omega_N$.
\begin{lemma}
  \label{L:calRa}
  There exist non-random constants $C$ and $N_0$ such that on $\Omega_N$ for $N>N_0$ we have
  \begin{equation*}
    |\mathcal{R}_a\topp N|\leq C N^{7/8}.
  \end{equation*}
\end{lemma}
\begin{proof} The constant will be given by a complicated expression that will appear at the end of the proof. Within the
proof, $C$ denotes the constant from Lemma  \ref{L:alpha}.

Notice that
\begin{equation*}\label{eq:Z-Z0}
  \mZ- \frac{\sqrt{MN}}{\la}\mZ_0= \frac{1}{\la^3}\mG^*\mC^*\mC\RRN \mC^*\mF\,,
\end{equation*}
so for large enough $N$ %
 we get
\begin{equation} \label{eq:|Z-Z0|}
   \|\mZ- \frac{\sqrt{MN}}{\la}\mZ_0\| \leq \frac{4}{\la^3}\sqrt{MN}\gamma_1\|\mC\|^3 \leq
   \frac{32\gamma_1}{\delta^{3/2}}N^{7/8}M^{-1}.
\end{equation}

 Recall \eqref{S-inv-norm}
and recall that $N$ is large enough so that $\alpha_r>1/2$.
Using \eqref{xrnorm} and writing $\mZ\mS^{-1}=\mZ\mS^{-1}(\mS-\mI_K)+\mZ$ we get
\begin{multline*}
    |\mathcal{R}_a\topp N|\leq
4C\la^2 N^{-3/8}\|\mS-\mI_K\|+ 8C \la N^{-3/8}\|\mZ\|\|\mS-\mI_K\|+ 4C \la N^{-3/8}\|\mZ- \frac{\sqrt{MN}}{\la}\mZ_0\|
\\  +  2CN^{-3/8}\|\mR-\mR_0\|+2CN^{-3/8}\|\mZ\|^2
\\
\leq
\frac{128 C\gamma_1^2}{\delta}N^{7/8}+\frac{1024 C\gamma_1^2}{ \delta^{3/2}}M^{-1/2}N+\frac{256 C\gamma_1^2}{ \delta^{3/2}}M^{-1/2}N^{}
\\+\frac{64C\gamma_1^2}{\delta}N^{7/8}+\frac{128C\gamma_1^2}{\delta}N^{7/8}.
\end{multline*}
(Here we used \eqref{eq:|S-I|+}, \eqref{eq:|Z|}, then  \eqref{eq:|Z-Z0|}, \eqref{eq:|R-R0|} and \eqref{eq:|Z|} again.)
 This concludes the proof.

\end{proof}
\begin{lemma}
  \label{L:eps_a} For every $\eta>0$, we have
  \begin{equation*}
    \label{eq:eps_a}
   \lim_{N\to\infty} \Pr\left(\left\{\left|\eps_a\topp N\right|>\eta\right\}\cap \Omega_N\right)=0.
  \end{equation*}
\end{lemma}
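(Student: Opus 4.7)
The plan is to bound $\eps_a\topp N$ deterministically on $\Omega_N$ by a vanishing factor times the norm of $\mZ_0$, and then show that $\|\mZ_0\|$ is stochastically bounded.

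First, on $\Omega_N$ (for $N$ large enough so that Lemma \ref{L:alpha} applies and $\alpha_r\geq 1/2$), Cauchy--Schwarz gives
\begin{equation*}
 |\eps_a\topp N|= \frac{1}{\alpha_r}\bigl|\vu_r^*(\mZ_0+\mZ_0^*)\vxr\bigr|
 \leq \frac{2}{\alpha_r}\,\|\mZ_0\|\,\|\vxr\|
 \leq 4 C N^{-3/8} \|\mZ_0\|,
\end{equation*}
where the last inequality uses the bound $\|\vxr\|\leq C N^{-3/8}$ from \eqref{xrnorm}.  So it remains to show that $\|\mZ_0\|$ is tight as $N\to\infty$.

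For this I would compute the second moment of the Frobenius norm.  The $(r,s)$ entry of $\mZ_0$ is
\begin{equation*}
[\mZ_0]_{r,s}=\frac{1}{\sqrt{MN}}\vg_r^*\mC^*\vf_s=\frac{1}{\sqrt{MN}}\sum_{i,j} g_{r,j}f_{s,i} X_{i,j},
\end{equation*}
which has mean zero and variance
\begin{equation*}
\E\bigl|[\mZ_0]_{r,s}\bigr|^2=\frac{1}{MN}\sum_{i,j} g_{r,j}^2 f_{s,i}^2 \sigma_{i,j}^2
\leq \frac{C}{MN}\|\vg_r\|^2\|\vf_s\|^2
\end{equation*}
by Assumption \ref{A1} on the variances being uniformly bounded.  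Since $\vf_s$ is a unit vector and $\|\vg_r\|^2=[\mR_0]_{r,r}\leq \|\mR_0\|\leq 2MN\|\mQ\|$ for large $N$ by \eqref{Assume-q-lim}, each entry has variance bounded by a constant independent of $N$.  Hence $\E\|\mZ_0\|^2\leq \E\|\mZ_0\|_F^2\leq K^2 C'$, and by Markov's inequality $\|\mZ_0\|$ is stochastically bounded.

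Combining, for any $\eta>0$ and $T>0$,
\begin{equation*}
\Pr\bigl(\{|\eps_a\topp N|>\eta\}\cap\Omega_N\bigr)
\leq \Pr\bigl(\|\mZ_0\|>T\bigr)+\Pr\bigl(4C N^{-3/8} T>\eta\bigr),
\end{equation*}
and the second probability is $0$ once $N$ is large enough, while the first can be made arbitrarily small by taking $T$ large.  This proves \eqref{eq:eps_a}.  The only mild subtlety is verifying the uniform variance bound on the entries of $\mZ_0$, which is the step I would double-check most carefully.
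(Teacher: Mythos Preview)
Your proof is correct and follows essentially the same route as the paper: bound $|\eps_a\topp N|$ on $\Omega_N$ by $C N^{-3/8}$ times a quantity built from $\mZ_0$, then show that quantity is stochastically bounded via a second-moment computation on the entries of $\mZ_0$. The paper phrases the intermediate quantity as $\zeta_N:=N^{-3/8}\max_s|\vu_r^*(\mZ_0+\mZ_0^*)\vu_s|$ rather than $N^{-3/8}\|\mZ_0\|$, but these are equivalent up to constants and the variance bound \eqref{eq:xi} is the same; one cosmetic slip is that the uniform bound on $\sigma_{i,j}^2$ comes from \eqref{Assume-tail}, not from Assumption~\ref{A1}.
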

\begin{proof} We first verify  that each entry of the matrix
$
N^{-3/8}\mZ_0
$,
which is well defined on $\Omega$, converges in probability to 0. To do so, we bound the second moment of
random variable $\xi= \vf_r^*\mC\vg_s$. Since the entries of $\mC$ are independent and centered random variables,
\begin{equation}
  \label{eq:xi}
  \E\xi^2=\sum_{i=1}^M\sum_{j=1}^N [\vf_r]_i^2 \sigma_{i,j}^2 [\vg_s]_j^2\leq \sup_{i,j}\sigma_{i,j}^2 \|\vg_r\|^2\leq C MN.
\end{equation}

Thus, see \eqref{eq:Z_0}, each entry of matrix $\mZ_0$ has bounded second moment, so
$$
\zeta_N:= N^{-3/8}\max_s|\ve_r^*(\mZ_0+\mZ_0^*)\ve_s|\to 0 \mbox{ in probability}.
$$
To end the proof  we note that by \eqref{eq:alphas}, for large enough $N$ we have $\left|\eps_a\topp N\right|\leq 2CK \zeta_N$ on $\Omega_N$, so
$$
\Pr\left(\left\{\left|\eps_a\topp N\right|>\eta\right\}\cap \Omega_N\right)
\leq \Pr\left(\{|\zeta_N|>\tfrac{\eta}{2CK}\}\cap\Omega_N \right)\leq \Pr\left(|\zeta_N|>\tfrac{\eta}{CK}\right)\to 0.
$$
 \end{proof}

 Using the identity \begin{equation*}
  \label{eq:S-inv-expand}
   \mI-\mS^{-1}=(\mS-\mI)-(\mS-\mI)^2\mS^{-1}
\end{equation*}
 to the first term   we rewrite \eqref{square1} as
\begin{equation}\label{square2}
   \la^2-(\rho_r\topp N)^2 =\la^2   \ve_r^*(\mS-\mI_K)\ve_r +2 \la
\ve_r^*\mZ\ve_r  +   \ve_r^*(\mR-\mR_0)\ve_r-\ve_r^*\mZ\mZ^*\ve_r+
\mathcal{R}_b\topp N+\sqrt{MN}\eps_a\topp N+\mathcal{R}_a\topp N,
\end{equation}
with
\begin{equation*}
\mathcal{R}_b\topp N= -\la^2   \ve_r^*(\mS-\mI_K)^2\mS^{-1}\ve_r+ 2\la \ve_r^*\mZ(\mI_K-\mS)\mS^{-1}\ve_r-
\ve_r^*\mZ(\mI_K-\mS)\mS^{-1}\mZ^*\ve_r\,.
\end{equation*}

\begin{lemma}
  \label{L:calRb}
  There exist non-random constants $C$ and $N_0$ such that on $\Omega_N$ for $N>N_0$ we have
  \begin{equation*}
    |\mathcal{R}_b\topp N|\leq C N^{7/8}.
  \end{equation*}
\end{lemma}
\begin{proof}
   Using \eqref{eq:la-growth} and previous norm estimates \eqref{eq:|S-I|+} and \eqref{eq:|Z|}, we get
\begin{multline*}
|\mathcal{R}_b\topp N|\leq 2\la^2 \|\mS-\mI_K\|^2+4\la\|\mZ\|\|\mS-\mI_K\| +2 \|\mZ\|^2 \|\mS-\mI_K\|\\
 \leq
\frac{512 \gamma_1^2}{\delta^2 } N^{3/2}M^{-1}+ \frac{512 \gamma_1^2}{\delta^{3/2}} N^{11/8}M^{-1/2}
+ \frac{1024 \gamma_1^2}{\delta^2}N^{3/2}M^{-1}.
\end{multline*}
This ends the proof.
\end{proof}
Define $K\times K$ random matrices %
 \begin{equation}
  \label{R1}
  \mR_1=\mG^*\mC^*\mC \mG
\end{equation}
and
  \begin{equation}
  \label{S1}
  \mS_1=\mF^*\mC\mC^* \mF\,.
\end{equation}
Recall that  $\E(\mR_1)=M \mathbf{\Sigma}_R$  and   $\E(\mS_1)=N\mathbf{\Sigma}_S$, see \eqref{Sigma_R} and \eqref{Sigma_S}.

\begin{lemma}\label{L:ZS-norm-bounds}
 There exist non-random constants  $C$ and $N_0$ such that on $\Omega_N$ for $N>N_0$ we have

\begin{equation*}
  \|\mS-\mI_K -\frac{1}{\la^2}\mS_1\|\leq C N^{-3/2}
\end{equation*}
and
\begin{equation*}
   \|\mR-\mR_0-\tfrac{1}{\la^2}\mR_1\|\leq C N^{1/2}.
\end{equation*}

\end{lemma}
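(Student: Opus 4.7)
The plan is to extract one more term from the Neumann expansion of $\RRM$ and $\RRN$, identify the leading piece with $\frac{1}{\la^2}\mS_1$ (resp.\ $\frac{1}{\la^2}\mR_1$), and control the quadratic remainder using the norm bounds already collected on $\Omega_N$.

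Concretely, on $\Omega_N$ (for $N$ large enough) the bound \eqref{eq:|C|^2} makes the series
$$\RRM=\mI_M+\tfrac{1}{\la^2}\mC\mC^*+\tfrac{1}{\la^4}(\mC\mC^*)^2\,\RRM$$
convergent in norm, and an analogous identity holds for $\RRN$. Substituting this one-step expansion into
$$\mS-\mI_K=\tfrac{1}{\la^2}\mF^*\mC\mC^*\RRM \mF, \qquad \mR-\mR_0=\tfrac{1}{\la^2}\mG^*\mC^*\mC\RRN \mG$$
(cf.\ the derivation of \eqref{eq:S-I} and the identity \eqref{eq:R-R0}) yields the exact formulas
$$\mS-\mI_K-\tfrac{1}{\la^2}\mS_1=\tfrac{1}{\la^4}\mF^*(\mC\mC^*)^2\RRM \mF,\qquad \mR-\mR_0-\tfrac{1}{\la^2}\mR_1=\tfrac{1}{\la^4}\mG^*(\mC^*\mC)^2\RRN \mG.$$

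It remains to bound the right-hand sides in spectral norm. For the first, I would use $\|\mF\|=1$, $\|\RRM\|\leq 2$, $\|\mC\mC^*\|=\|\mC\|^2\leq N^{5/4}$ from \eqref{eq:|C|}, and $\la^4\geq \tfrac{\delta^2}{16}M^2N^2$ from \eqref{eq:la-growth}, giving
$$\|\mS-\mI_K-\tfrac{1}{\la^2}\mS_1\|\leq \tfrac{16}{\delta^2 M^2 N^2}\cdot N^{5/2}\cdot 2 = \tfrac{32}{\delta^2}\,N^{1/2}M^{-2},$$
which is $O(N^{-3/2})$ by \eqref{eq:M/N}. For the second, the same argument with the additional factor $\|\mG\|^2\leq 4MN\|\mQ\|$ (valid for large $N$, as noted after Assumption \ref{A1}) gives
$$\|\mR-\mR_0-\tfrac{1}{\la^2}\mR_1\|\leq \tfrac{16}{\delta^2 M^2 N^2}\cdot 4MN\|\mQ\|\cdot N^{5/2}\cdot 2 = \tfrac{128\|\mQ\|}{\delta^2}\,N^{3/2}M^{-1},$$
which is $O(N^{1/2})$.

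No serious obstacle is expected: the whole argument is a one-line Neumann-series identity followed by the standard norm estimates already in use. The only things to be careful about are (i) ensuring $N$ is large enough that \eqref{eq:|C|^2}, the bound $\|\RRM\|,\|\RRN\|\leq 2$, and the bound $\|\mG\|\leq 2\sqrt{MN\|\mQ\|}$ all apply simultaneously (this is where the non-random $N_0$ is fixed), and (ii) noting that the factor $M^{-2}$ in the first bound and $M^{-1}$ in the second come from $\la^4\gtrsim M^2N^2$, together with \eqref{eq:M/N}, so both $N^{1/2}M^{-2}=O(N^{-3/2})$ and $N^{3/2}M^{-1}=O(N^{1/2})$.
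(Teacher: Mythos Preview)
Your proposal is correct and follows essentially the same approach as the paper: both derive the exact remainder identities
\[
\mS-\mI_K-\tfrac{1}{\la^2}\mS_1=\tfrac{1}{\la^4}\mF^*(\mC\mC^*)^2\RRM\mF,\qquad
\mR-\mR_0-\tfrac{1}{\la^2}\mR_1=\tfrac{1}{\la^4}\mG^*(\mC^*\mC)^2\RRN\mG,
\]
and then bound them using $\|\mF\|=1$, $\|\mG\|^2\leq 4MN\|\mQ\|$, $\|\mC\|^4\leq N^{5/2}$, $\la^4\geq \tfrac{\delta^2}{16}M^2N^2$, and the resolvent bound $\leq 2$, arriving at the same constants $\tfrac{32}{\delta^2}N^{1/2}M^{-2}$ and $\tfrac{128\|\mQ\|}{\delta^2}N^{3/2}M^{-1}$.
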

\begin{proof}
Notice that %
\begin{equation*}
    \mS-\mI_K-\frac{1}{\la^2}\mS_1
  =\frac{1}{\la^4}\mF^*(\mC\mC^*)^2(\mI_M-\tfrac{1}{\la^2}\mC\mC^*)^{-1}\mF.
\end{equation*}
For large enough $N$ (so that \eqref{eq:|C|} and \eqref{eq:la-growth} hold), this gives %
$$\|\mS-\mI_K-\frac{1}{\la^2}\mS_1\|\leq
\frac{2}{\la^4}\|\mC\|^4 \leq \frac{32}{\delta^2}M^{-2}N^{1/2}.
$$

Similarly, since %
\begin{equation*}
   \mR-\mR_0-\tfrac{1}{\la^2}\mR_1
   =\frac{1}{\la^4}  \mG^*  (\mC^*\mC)^2\RRN  \mG
\end{equation*}
for large enough $N$, we get %
$$
\| \mR-\mR_0-\tfrac{1}{\la^2}\mR_1\|\leq   \frac{2}{\la^4}\|\mG\|^2 \|\mC\|^4 \leq
\frac{128\gamma_1^2}{\delta^2}M^{-1}N^{3/2}.
$$
\end{proof}
 We now rewrite \eqref{square2} as follows.
 \begin{multline*}\label{square-c}
   \la^2-(\rho_r\topp N)^2 \\= \ve_r^*(\mS_1)\ve_r +2 \sqrt{MN}
\ve_r^*\mZ_0\ve_r  +   \frac{1}{\la^2}\ve_r^*(\mR_1)\ve_r-\frac{MN}{\la^2}\ve_r^*\mZ_0\mZ_0^*\ve_r+
\mathcal{R}_c\topp N+\mathcal{R}_b\topp N+\sqrt{MN}\eps_a\topp N+\mathcal{R}_a\topp N,
\end{multline*}
 where
\begin{multline*}
  \mathcal{R}_c\topp N  =\ve_r^*\left(\la^2(\mS-\mI_K-\frac1{\la^2}\mS_1)+ 2(\la\mZ-\sqrt{MN}\mZ_0)+  (\mR-\mR_0-\frac1{\la^2}\mR_1)+(\frac{MN}{\la^2}\mZ_0\mZ_0^*-\mZ\mZ^*)\right)\ve_r\,.
\end{multline*}
\begin{lemma}\label{L:calRc}
   There exist non-random constants $C$ and $N_0$ such that on $\Omega_N$ for $N>N_0$ we have
   $$
   |\mathcal{R}_c\topp N |\leq C N^{7/8}.
   $$
\end{lemma}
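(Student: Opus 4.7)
The plan is to bound each of the four terms appearing in $\mathcal{R}_c^{(N)}$ separately, using bounds established earlier in the proof and in Lemma~\ref{L:ZS-norm-bounds}. Since $\vu_r$ is a unit vector, it suffices to bound each term in spectral (or operator) norm. Throughout we use the two-sided bound \eqref{eq:la-growth} for $\la$, namely $\la^2 \leq 4\|\mQ\|MN$ and $\la^2 \geq \delta MN/4$, together with $M/N \to c > 0$.

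For the first term, Lemma~\ref{L:ZS-norm-bounds} gives $\|\mS - \mI_K - \la^{-2}\mS_1\| \leq CN^{-3/2}$, so multiplying by $\la^2 \leq CN^{2}$ yields a bound of order $N^{1/2}$. For the third term, Lemma~\ref{L:ZS-norm-bounds} directly gives a bound of order $N^{1/2}$. For the second term, inequality \eqref{eq:|Z-Z0|} gives $\|\mZ - \sqrt{MN}\la^{-1}\mZ_0\| \leq CN^{7/8}M^{-1}$, and multiplying by $\la \leq 2\sqrt{\|\mQ\|MN}$ together with $M \asymp N$ produces a bound of order $N^{7/8}$, which will turn out to be the dominant contribution.

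The main obstacle is the fourth term, $\frac{MN}{\la^2}\mZ_0\mZ_0^* - \mZ\mZ^*$, because it is a \emph{difference of products} rather than a single matrix difference. The plan is to introduce the abbreviation $\mathbf{W} = \mZ - \sqrt{MN}\la^{-1}\mZ_0$ and expand
\begin{equation*}
\mZ\mZ^* = \tfrac{MN}{\la^2}\mZ_0\mZ_0^* + \tfrac{\sqrt{MN}}{\la}\mZ_0\mathbf{W}^* + \mathbf{W}\tfrac{\sqrt{MN}}{\la}\mZ_0^* + \mathbf{W}\mathbf{W}^*,
\end{equation*}
so that the quantity of interest equals $-\sqrt{MN}\la^{-1}(\mZ_0\mathbf{W}^* + \mathbf{W}\mZ_0^*) - \mathbf{W}\mathbf{W}^*$. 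The factor $\sqrt{MN}/\la$ is bounded by $2/\sqrt{\delta}$ by \eqref{eq:la-growth}. The norm $\|\mZ_0\|$ is controlled by $\|\mZ_0\| \leq \tfrac{1}{\sqrt{MN}}\|\mG\|\,\|\mC\| \leq CN^{5/8}$ using $\|\mG\| \leq 2\sqrt{MN\|\mQ\|}$ and \eqref{eq:|C|}. Combined with $\|\mathbf{W}\| \leq CN^{7/8}M^{-1}$ from \eqref{eq:|Z-Z0|}, the cross terms are bounded by $CN^{5/8} \cdot N^{7/8}/M \leq CN^{1/2}$ (using $M \asymp N$), while $\|\mathbf{W}\mathbf{W}^*\| \leq CN^{7/4}/M^2 \leq CN^{-1/4}$.

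Adding the four contributions, the $N^{7/8}$ from the second term dominates, and all other terms are $O(N^{1/2})$ or smaller, yielding $|\mathcal{R}_c^{(N)}| \leq CN^{7/8}$ on $\Omega_N$ for $N$ larger than some deterministic $N_0$. The routine work is simply chaining the norm estimates \eqref{eq:|C|}, \eqref{eq:la-growth}, \eqref{eq:|Z|}, \eqref{eq:|Z-Z0|}, together with Lemma~\ref{L:ZS-norm-bounds}; the only place that requires an algebraic identity rather than a direct application is the expansion of $\mZ\mZ^* - \tfrac{MN}{\la^2}\mZ_0\mZ_0^*$.
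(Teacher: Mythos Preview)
Your proof is correct and follows essentially the same approach as the paper's. The only cosmetic difference is in the handling of the fourth term: the paper writes the bound as $\|\mZ-\tfrac{\sqrt{MN}}{\la}\mZ_0\|\bigl(\|\mZ\|+\tfrac{\sqrt{MN}}{\la}\|\mZ_0\|\bigr)$ via the identity $AA^*-BB^*=(A-B)A^*+B(A-B)^*$, whereas you expand $\mZ\mZ^*$ after substituting $\mZ=\tfrac{\sqrt{MN}}{\la}\mZ_0+\mathbf{W}$ and treat the cross terms and the $\mathbf{W}\mathbf{W}^*$ term separately; the resulting orders of magnitude are identical.
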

\begin{proof} %
As in the proof of Lemma \ref{L:calRa},
the final constant $C$ can  be read out from the bound at the end of the proof. In the proof, $C$ is a constant from Lemma \ref{L:ZS-norm-bounds}.
By the triangle inequality, Lemma \ref{L:ZS-norm-bounds}, \eqref{eq:|Z-Z0|} and \eqref{eq:|Z|}, we have %
  \begin{multline*}
     |\mathcal{R}_c\topp N |\leq \la^2\|\mS-\mI_K-\frac1{\la^2}\mS_1\|+2\| \la\mZ-\sqrt{MN}\mZ_0\|\\+\|\mR-\mR_0-\frac1{\la^2}\mR_1\|+
 \|\mZ-\frac{\sqrt{MN}}{\la}\mZ_0\|(\|\mZ\|+\frac{\sqrt{MN}}{\la}\|\mZ_0\|)
   \\
   \leq 4 C \gamma_1^2M N^{-1/2}+  \frac{128\gamma_1^2}{\delta^{3/2}}M^{-1/2}N^{11/8}+ C N^{1/2}+
   \frac{256 \gamma_1^2}{\delta^2}M^{-1}N^{3/2}+\frac{128\gamma_1^2}{\delta^{3/2}} M^{-1}N^{3/2}.
  \end{multline*}
   (Here we used the bound $\|\mZ_0\|\leq 2\gamma_1  N^{5/8}$, which is derived similarly to \eqref{eq:|Z|}.)
\end{proof}

The following holds on $\Omega$. (Recall that expressions \eqref{S1},  \eqref{eq:Z_0} and \eqref{R1} are well defined  on  $\Omega$.)

\begin{proposition} There exists a random sequence $\mathcal{\eps}_b\topp N\to 0$ in probability as $N\to\infty$ such that
\begin{equation}\label{eq:La-epsb}
   \la-{\rho_r\topp N} =  \frac{1}{\la+{\rho_r\topp N}} \ve_r^*(\mS_1)\ve_r + \frac{2\sqrt{MN}}{\la+{\rho_r\topp N}}
\ve_r^*\mZ_0\ve_r  +  \frac{1}{(\la+{\rho_r\topp N})\la^2} \ve_r^*(\mR_1)\ve_r+
\mathcal{\eps}_b\topp N.
\end{equation}\end{proposition}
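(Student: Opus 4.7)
The plan is simply to factor $\la^2-\rho_r\topp N=(\la-\sqrt{\rho_r\topp N})(\la+\sqrt{\rho_r\topp N})$ and divide equation \eqref{square-c} by $\la+\sqrt{\rho_r\topp N}$. The first three terms on the right-hand side of \eqref{square-c} then reproduce verbatim the three explicit leading terms of \eqref{eq:La-epsb}, and everything else is packaged into
\begin{equation*}
\eps_b\topp N := \frac{1}{\la+\sqrt{\rho_r\topp N}}\left(\mathcal{R}_a\topp N+\mathcal{R}_b\topp N+\mathcal{R}_c\topp N+\sqrt{MN}\,\eps_a\topp N-\frac{MN}{\la^2}\vu_r^*\mZ_0\mZ_0^*\vu_r\right),
\end{equation*}
defined on $\Omega_N$ and set to $0$ elsewhere. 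Since $\Pr(\Omega_N)\to 1$, it suffices to prove $\eps_b\topp N\to 0$ in probability on $\Omega_N$.

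The basic quantitative input is that \eqref{eq:la-growth} together with $\rho_r\topp N/(MN)\to\gamma_r>0$ gives, on $\Omega_N$ for all large $N$, a deterministic lower bound $\la+\sqrt{\rho_r\topp N}\geq c\sqrt{MN}$. Given this, each of the three remainders $\mathcal{R}_a\topp N,\mathcal{R}_b\topp N,\mathcal{R}_c\topp N$ contributes a term bounded deterministically by $CN^{7/8}/\sqrt{MN}=O(N^{-1/8})$, using Lemmas \ref{L:calRa}, \ref{L:calRb}, \ref{L:calRc} together with $M\asymp N$. The contribution from $\sqrt{MN}\,\eps_a\topp N$ is a bounded multiple of $\eps_a\topp N$ (since $\sqrt{MN}/(\la+\sqrt{\rho_r\topp N})$ is bounded on $\Omega_N$), and vanishes in probability by Lemma \ref{L:eps_a}.

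The only term not handled by the already-proved lemmas is $MN\vu_r^*\mZ_0\mZ_0^*\vu_r/(\la^2(\la+\sqrt{\rho_r\topp N}))$. Its prefactor is of order $1/\sqrt{MN}$ on $\Omega_N$ by \eqref{eq:la-growth}, so it suffices to show that $\vu_r^*\mZ_0\mZ_0^*\vu_r$ is stochastically bounded. For this I would reuse the second-moment computation \eqref{eq:xi} from the proof of Lemma \ref{L:eps_a}: every entry of the fixed-size $K\times K$ matrix $\mZ_0$ has second moment bounded uniformly in $N$, hence $\|\mZ_0\|_F^2$ is $O(1)$ in probability by Markov's inequality, and this dominates $\vu_r^*\mZ_0\mZ_0^*\vu_r$.

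I do not anticipate a genuine obstacle here: once the factorization is in place, each summand of $\eps_b\topp N$ is dispatched by an estimate already recorded in the preceding lemmas, and the only subtlety is keeping track of the fact that $\la$ is not deterministic, which is controlled by working on $\Omega_N$ where \eqref{eq:la-growth} confines $\la$ between two constant multiples of $\sqrt{MN}$.
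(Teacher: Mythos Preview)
Your proposal is correct and essentially identical to the paper's proof. The only minor discrepancy is that the paper defines $\eps_b\topp N$ on $\Omega_N^c$ to be whatever residual makes \eqref{eq:La-epsb} hold identically (rather than setting it to $0$), so that the displayed equation is valid on all of $\Omega$; since $\Pr(\Omega_N^c)\to 0$ this does not affect your convergence argument, and the remaining estimates---including the $\mZ_0\mZ_0^*$ term via $\E\|\mZ_0\|_F^2\leq K^2C$---match the paper exactly.
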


\begin{proof}
Let
$$
\mathcal{\eps}_b\topp N =\begin{cases}
   \frac{1}{\la+{\rho_r\topp N}}\left(-\frac{MN}{\la^2}\ve_r^*\mZ_0\mZ_0^*\ve_r+
\mathcal{R}_c\topp N+\mathcal{R}_b\topp N+\sqrt{MN}\eps_a\topp N+\mathcal{R}_a\topp N\right) & \mbox{ on $\Omega_N$},\\
  \la-{\rho_r\topp N} -  \frac{1}{\la+{\rho_r\topp N}} \ve_r^*(\mS_1)\ve_r - \frac{2\la}{\la+{\rho_r\topp N}}
\ve_r^* \mZ_0 \ve_r  -  \frac{1}{(\la+{\rho_r\topp N})\la^2} \ve_r^*(\mR_1)\ve_r & \mbox{ otherwise}.
\end{cases}
$$
By Lemma \ref{L:P(Omega)} we have $\Pr(\Omega_N')\to 0$, so it is enough to show that given $\eta>0$ we have $\Pr(\{|\mathcal{\eps}_b\topp N|>5\eta\}\cap \Omega_N)\to 0$ as $N\to\infty$.
Since the event $|\xi_1+\dots+\xi_5|>5\eta$ is included in the union of events $|\xi_1|>\eta, \dots ,|\xi_5|>\eta$, in view of Lemmas \ref{L:calRa},  \ref{L:calRb}, \ref{L:calRc}, and
Lemma \ref{L:eps_a}
  (recalling that expressions  $\sqrt{MN}/(\la+{\rho_r\topp N})$ and $MN/\la^2$ are bounded by a non-random constant on $\Omega_N$, see \eqref{eq:la-growth})
we only need to verify that
$$\Pr\left( \left\{\frac{\ve_r^*\mZ_0\mZ_0^*\ve_r}{\la+{\rho_r\topp N}}>\eta\right\} \cap \Omega_N\right)\leq
\Pr\left(\frac{\ve_r^*\mZ_0\mZ_0^*\ve_r}{{\rho_r\topp N}}>\eta\right)
\to 0 \mbox{ as $N\to\infty$} .$$
Since for large enough $N$, we have $\rho_r^2\geq \delta MN$, convergence follows from %
\begin{equation}
  \label{Z0-stoch-bounded}
  \E\ve_r^*\mZ_0\mZ_0^*\ve_r\leq \E \|\mZ_0\|^2
\leq \E \|\mZ_0\|_F^2\leq K^2 C,
\end{equation}
where $C$ is a constant from \eqref{eq:xi}.

\end{proof}

\begin{proof}[Proof of Proposition \ref{Prop:la=}] Recall that $\E\mS_1=N\mathbf{\Sigma}_S$ and $\E\mR_1=M\mathbf{\Sigma}_R$. So
expression \eqref{eq:La-epsc} differs from \eqref{eq:La-epsb} only by two terms:
  $$\frac{1}{\la+ {\rho_r\topp N}} \ve_r^*(\mS_1-\E\mS_1)\ve_r$$ and
  $$\frac{1}{(\la+ {\rho_r\topp N})\la^2} \ve_r^*(\mR_1-\E\mR_1)\ve_r.$$
 Since $\rho_r\topp N/\sqrt{MN}\to \gamma_r>0$ and by Lemma \ref{L:P(Omega)} we have $\la/\sqrt{MN}\to  {\gamma_r}$ in probability,
 to end the proof we show that
 $\frac{1}{N}\|\mS_1-\E\mS_1\|_F\to 0$ and $\frac{1}{N^3}\|\mR_1-\E\mR_1\|_F\to 0$ in probability. To do so, we bound  the second moments of the entries of the matrices.
 Recalling \eqref{S1}, we have %
 \begin{multline*}
     f_r^*(\mC\mC^*-\E(\mC\mC^*))f_s=\sum_{k=1}^N \sum_{i\ne j} [\vf_r]_iX_{i,k} X_{j,k}[\vf_s]_j+
     \sum_{k=1}^N\sum_{i=1}^M[\vf_r]_i(X_{i,k}^2-\sigma_{i,k}^2)[\vf_s]_i
     = A_N+B_N \mbox{ (say)}.
 \end{multline*}
By independence, we have
$$
\E (A_N^2)=\sum_{k=1}^N \sum_{i\ne j} [\vf_r]_i^2[\vf_s]_j^2\sigma_{i,k}^2\sigma_{j,k}^2
\leq C \sum_{k=1}^N \sum_{i=1}^N\sum_{j=1}^N [\vf_r]_i^2[\vf_s]_j^2
\leq CN.
$$
Next,
$$\E(B_N^2)=\sum_{k=1}^N\sum_{i=1}^M [\vf_r]_i^2[\vf_s]_i^2\E(X_{i,k}^2-\sigma_{i,k}^2)^2\leq
NC \sqrt{\sum_{i=1}^M [\vf_r]_i^4}\sqrt{ \sum_{i=1}^M [\vf_s]_i^4}\leq CN.$$
This shows that (with a different $C$) we have  $\E \left|f_r^*(\mC\mC^*-\E(\mC\mC^*)f_s)\right|^2  \leq CN$ and hence
$\frac{1}{N}\|\mS_1-\E\mS_1\|_F\to 0$ in mean square and in probability.

Similarly, recalling \eqref{R1} we have %
\begin{multline*}
  \vg_r^*(\mC^*\mC-\E(\mC^*\mC))\vg_s=\sum_{k=1}^M\sum_{i\ne j} [\vg_r]_i [\vg_s]_j X_{k,i}X_{k,j}+
  \sum_{k=1}^M\sum_{i=1}^N [\vg_r]_i [\vg_s]_i (X_{k,i}^2-\sigma_{k,i}^2)
  =\widetilde A_N+\widetilde B_N \mbox{ (say)}.
\end{multline*}
Using independence of entries again, we get %
$$
\E(\widetilde A_N^2)=\sum_{k=1}^M\sum_{i\ne j} [\vg_r]_i^2 [\vg_s]_j^2 \sigma^2_{k,i}\sigma^2_{k,j}
\leq C M \|\vg_s\|^2\|\vg_r\|^2\leq 2 C\gamma_1^4 M^3N^2
$$
for large enough $N$.
Similarly,
$$
\E(\widetilde B_N^2)=\sum_{k=1}^M\sum_{i=1}^N [\vg_r]_i^2[\vg_s]_i^2\E(X_{k,i}^2-\sigma^2_{k,i})^2
\leq CM \|\vg_r\|^2\|\vg_s\|^2\leq 2C\gamma_1^4 M^3N^2.
$$
This shows that (with a different $C$) we have  $ \E \left|\vg_r^*(\mC^*\mC-\E(\mC^*\mC))\vg_s\right|^2 \leq CN^5$ and hence
$\frac{1}{N^3}\|\mR_1-\E\mR_1\|_F\to 0$ in mean square and in probability.
\end{proof}
\subsubsection{Conclusion of proof of Theorem \ref{Thm:expansion}}

Theorem  \ref{Thm:expansion} is essentially a combination of \eqref{eq:La-epsc}, and convergence in probability from Lemma
\ref{L:P(Omega)}.
\begin{proof}[Proof of Theorem \ref{Thm:expansion}]
We need to do a couple more approximations to the right hand side of  \eqref{eq:La-epsc}.
Indeed,
we see that
$$\frac{N}{\la+ {\rho_r\topp N}}=\sqrt{N/M}\frac{\sqrt{MN}}{\la+ {\rho_r\topp N}}\to \frac{\sqrt{c}}{2{\gamma_r}},
 $$
  $$\frac{2\sqrt{MN}}{\la+ \rho_r\topp N} \to \frac{1}{\gamma_1},$$
$$\frac{M}{(\la+ \rho_r\topp N)\la^2}   \sim
 \frac{ \sqrt{M/N}}{2{\gamma_r}}\frac{1}{\gamma_r^2 MN}\sim \frac{\sqrt{c}}{2 \gamma_r^3 MN.}
 $$
 To conclude the proof, we note that sequences $\{[\mathbf{\Sigma}_S\topp N]_{r,r}\}_N$,
 $\{[\mathbf{\Sigma}_R\topp N]_{r,r}/(MN)\}$ are bounded and $\{[\mZ_0\topp N]_{r,r}\}$ is stochastically bounded
  by \eqref{eq:xi}.

\end{proof}
\begin{remark}\label{Remark-Q} Recall \eqref{B-from-tilde} and \eqref{R0} from Remark \ref{Remark-tilde-all}.
Examples in Section \ref{Sect: SNP}   have the additional property that
      \begin{equation}
    \label{Assume-q-lim}
   \tfrac1{MN}\widetilde \mR_0\topp N \to \mQ.
  \end{equation}
  Under Assumption \ref{A1}, the eigenvalues of $\mQ$ are $\gamma_1^2>\gamma_2^2>\dots>\gamma_K^2>0$.
  Denoting by $\vv_r$ the corresponding orthonormal eigenvectors,
   we may assume that the first non-zero component of $\vv_r$ is positive.
  After choosing the appropriate sign, without loss of generality  we may assume that the same component of $\widetilde \vu_r\topp N$
 is non-negative for all $N$. Since by assumption eigenspaces of $\widetilde \mR_0$ are one-dimensional for large enough $N$, we have
 \begin{equation*}\label{conv-of-u}
   \widetilde \vu_r\topp {N}\to\vv_r \mbox{ as $N\to\infty$}.
 \end{equation*}
We claim that in
  \eqref{eq:Z_r-b} and in  \eqref{mmm-b} we can replace vectors $\widetilde \vu_r$ by the corresponding eigenvectors $\vv_r$ of $\mQ$.   Indeed, as in the proof of Theorem \ref{Thm:expansion}
 the entries of the  $K\times K$ matrices $\widetilde{\mathbf{\Sigma}}_R/(MN)$ and $\widetilde{\mathbf{\Sigma}}_S$ are bounded as $N\to\infty$.  Also, we note that each entry $\xi= \widetilde\vf_r^*\mC\widetilde\vg_s$ of matrix $\widetilde \mZ_0$  is stochastically bounded
due to a uniform bound on the second moment:
\begin{equation*}
  \label{eq:xi2}
  \E\xi^2=\sum_{i=1}^M\sum_{j=1}^N [\vf_r]_i^2 \sigma_{i,j}^2 [\vg_s]_j^2\leq \sup_{i,j}\sigma_{i,j}^2 \|\vg_r\|^2\leq C MN.
\end{equation*}
(Compare \eqref{eq:xi}.)  This allows for the replacement of the $\widetilde \vu_r$ with the $\vv_r$.

\end{remark}

\section{Asymptotic normality of singular values}\label{Sect:AsymptNorm}
In this section we apply Theorem \ref{Thm:expansion} to deduce asymptotic normality. To reduce technicalities involved, we begin with the simplest case of
mean with rank 1.  An example with mean of rank 2 is worked out in Section \ref{Sec:K=2}. A more involved application to population biology appears in Section \ref{Sect: SNP}.
We use simulations to illustrate these results.

\subsection{Rank 1 perturbation}
The following is closely related to  \cite[Theorem 1.3]{Silverstein:1994} that was mentioned in the introduction.

\begin{proposition}
Fix an infinite sequence $\{\mu_j\}$ such that the limit %
 $\gamma^2=\lim_{N\to\infty} \frac{1}{N}\sum_{j=1}^N\mu_j^2$ exists {and is strictly positive}.
Consider the case $K=1$, and assume that entries of random matrix $\mDD\in\calM_{M\times N}$ are independent,
with the same mean $\mu_j$ in the $j$-th column, the same variance $\sigma^2$,
and uniformly bounded fourth moments. For the largest singular  value $\la$ of $\mDD$, we have
$\la- \sqrt{M} \left(\sum_{j=1}^N\mu_j^2\right)^{1/2}\toD Z$ where $Z$ is normal with mean
 $\frac{\sigma^2}{2{\gamma}}(\sqrt{c}+1/\sqrt{c})$ and variance $\sigma^2$.
  {(Here $\toD$ denotes convergence in distribution.)}
\end{proposition}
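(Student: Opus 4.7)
The plan is to reduce the statement to a direct application of Theorem \ref{Thm:expansion} followed by the classical central limit theorem, with Slutsky's theorem joining the deterministic and random parts.

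First I would set up the rank-one decomposition compatible with the theorem. The mean matrix is $\mathbf{1}_M[\mu_1,\dots,\mu_N]$, so I take $\vf_1 = \mathbf{1}_M/\sqrt{M}$ (a unit vector, as Assumption \ref{A1}(i) demands) and $\vg_1 = \sqrt{M}\,[\mu_1,\dots,\mu_N]^*$, which indeed gives $\vf_1\vg_1^* = \mathbf{1}_M[\mu_1,\dots,\mu_N]$. Then $\mR_0 = \vg_1^*\vg_1 = M\sum_{j=1}^N \mu_j^2$, so $\tfrac{1}{MN}\mR_0 \to \gamma$, verifying Assumption \ref{A1}(ii) with $\mQ=\gamma$, and since $K=1$ the eigenvalue is simple and positive (iii). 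Moreover $\sqrt{\rho_1\topp N} = \sqrt{M}\bigl(\sum_{j=1}^N\mu_j^2\bigr)^{1/2}$, which is exactly the centering in the statement, and $\vv_1 = 1$.

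Next I would compute the deterministic shift $m_1\topp N$ and the random fluctuation $Z_1\topp N$. Because all entries have common variance $\sigma^2$, we have $\mD_R = \sigma^2 \mI_N$ and $\mD_S = \sigma^2 \mI_M$; hence $\mathbf{\Sigma}_R = \sigma^2\|\vg_1\|^2 = \sigma^2 M\sum_j\mu_j^2$ and $\mathbf{\Sigma}_S = \sigma^2\|\vf_1\|^2 = \sigma^2$. Plugging into \eqref{mmm} and using $\frac{1}{N}\sum_j\mu_j^2 \to \gamma$ gives
\[
m_1\topp N \;=\; \tfrac{1}{2\sqrt{c\gamma}}\Bigl(\tfrac{c\sigma^2}{\gamma N}\sum_{j=1}^N\mu_j^2 \;+\; \sigma^2\Bigr) \;\longrightarrow\; \tfrac{\sigma^2}{2\sqrt{\gamma}}\bigl(\sqrt{c}+1/\sqrt{c}\bigr),
\]
which is the claimed mean of $Z$. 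The random term is the scalar
\[
Z_1\topp N \;=\; \tfrac{1}{\sqrt{\gamma}}\mZ_0 \;=\; \tfrac{1}{\sqrt{\gamma MN}}\sum_{i=1}^M\sum_{j=1}^N \mu_j X_{i,j},
\]
a sum of independent centered variables with total variance $\frac{\sigma^2}{\gamma N}\sum_j\mu_j^2 \to \sigma^2$.

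To conclude I would apply the CLT to $Z_1\topp N$. Since the summands $\mu_j X_{i,j}/\sqrt{\gamma MN}$ are independent and the fourth moments of the $X_{i,j}$ are uniformly bounded while the $\mu_j$ form a sequence with bounded Ces\`aro average of squares, the Lyapunov condition with exponent $4$ is immediate (the sum of fourth moments is $O((MN)^{-2}\cdot MN) = O((MN)^{-1}) \to 0$). Hence $Z_1\topp N \Rightarrow N(0,\sigma^2)$. Theorem \ref{Thm:expansion} gives $\la_1 - \sqrt{\rho_1\topp N} = Z_1\topp N + m_1\topp N + \varepsilon_1\topp N$ with $\varepsilon_1\topp N \to 0$ in probability, and Slutsky's theorem assembles the asymptotic normality in the stated form.

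The main obstacle is purely bookkeeping — matching the normalizations of $\vf_1,\vg_1$ and simplifying $m_1\topp N$ so that the $c$'s, $\gamma$'s and $\sigma^2$'s combine into the advertised expression $\tfrac{\sigma^2}{2\sqrt{\gamma}}(\sqrt{c}+1/\sqrt{c})$; the CLT step itself is routine given the uniform fourth-moment bound. The hypothesis that $\tfrac{1}{N}\sum_{j=1}^N\mu_j^2$ converges is exactly what is needed both for Assumption \ref{A1}(ii) and for the limiting variance $\sigma^2$ to be attained.
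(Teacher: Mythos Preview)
Your proposal is correct and follows essentially the same argument as the paper: identical choice of $\vf_1,\vg_1$, the same computation of $\rho_1$, $\mathbf{\Sigma}_R$, $\mathbf{\Sigma}_S$, $m_1\topp N$, and $Z_1\topp N$, and the same appeal to Lyapunov's CLT combined with Slutsky. One small imprecision worth tightening: your Lyapunov estimate $O((MN)^{-2}\cdot MN)=O((MN)^{-1})$ tacitly assumes $\sum_{j=1}^N\mu_j^4=O(N)$, which is \emph{not} implied by convergence of $\tfrac{1}{N}\sum_j\mu_j^2$ alone; the paper instead uses the crude bound $\sum_j\mu_j^4\le\bigl(\sum_j\mu_j^2\bigr)^2$, yielding $\tfrac{C}{M}\bigl(\tfrac{1}{N}\sum_j\mu_j^2\bigr)^2\to 0$, which suffices.
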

\begin{proof}
In this setting %
$\mB=\vf \vg^*$
with  $\vf=M^{-1/2}[1,\dots,1]^*$, $\vg= \sqrt{M}[\mu_1,\dots,\mu_N]^*$. We get  $\rho_1^2= M\sum_{j=1}^N\mu_j^2$,
 $\gamma_1=\gamma$, $\mathbf{\Sigma}_R=\sigma^2 M\sum_{j=1}^N\mu_j^2$, $\mathbf{\Sigma}_S=\sigma^2$, and
$$\mZ_0=\frac{1}{\sqrt{MN}}\sum_{i=1}^M\sum_{j=1}^N X_{i,j}\mu_j, \mbox{ so \eqref{eq:Z_r} gives } Z_1= \frac{1}{\sqrt{MN}\gamma}\sum_{i=1}^M\sum_{j=1}^N X_{i,j}\mu_j.$$
Thus, %
the largest singular value of $\mDD$ can be written as
$$
\la= \sqrt{M} \left(\sum_{j=1}^N\mu_j^2\right)^{1/2}+\frac{\sigma^2(\sqrt{c}+1/\sqrt{c})}{2{\gamma}}   +
\frac{1}{\sqrt{MN}\gamma}\sum_{i=1}^M\sum_{j=1}^N X_{i,j}\mu_j+\eps\topp N,
$$
where $\eps\topp N\to 0$ in probability.
We have
$$\Var(Z_1)=\frac{\sigma^2}{\gamma^2}\frac1N\sum_{j=1}^N\mu_j^2\to \sigma^2$$
and the sum of the fourth moments of the terms in $Z_1$ is
$$\frac{1}{M^2N^2\gamma^4}\sum_{i=1}^M\sum_{j=1}^N \mu_j^4 \E X_{i,j}^4 \leq \frac{C}M   \left(\frac1N\sum_{j=1}^N\mu_j^2\right)^2 \to 0.$$
So $Z_1$ is asymptotically normal by Lyapunov's theorem  \cite[Theorem 27.3]{Billingsley}.
\end{proof}
\subsection{Block matrices}\label{Sec:K=2} %
Consider $(2M)\times(2N)$ block matrices
$$\mDD=\begin{bmatrix}
  \mA_1 & \mA_2 \\
  \mA_3 & \mA_4
\end{bmatrix} ,$$
where $\mA_1,\dots,\mA_4$ are independent random $M\times N$ matrices.   We assume that the entries of
 $\mA_j$ are independent real identically distributed random variables with mean $\mu_j$, variance $\sigma_j^2$,  and with finite fourth moment.
Then from \eqref{B-from-tilde}, $\mB=\E(\mDD)=\widetilde\vf_1\widetilde\vg_1^*+\widetilde\vf_2\widetilde\vg_2^*$; it is of rank $K=2$ with orthonormal
$$
[\widetilde\vf_1]_i=\begin{cases}
  1/\sqrt{M} & \mbox{ for $1\leq i\leq M$} \\
  0 & \mbox{ for $M+1\leq i\leq 2M$}
\end{cases}
$$

$$
[\widetilde\vf_2]_i=\begin{cases}
0 & \mbox{ for $1\leq i\leq M$} \\
  1/\sqrt{M}  & \mbox{ for $M+1\leq i\leq 2M$}
\end{cases}
$$
and with
$$
[\widetilde\vg_1]_j=\begin{cases}
 \sqrt{M}\mu_1  &    \mbox{ for $1\leq j\leq N$} \\
  \sqrt{M}\mu_2  \hspace{2cm}  & \mbox{ for $N+1\leq j\leq 2N$}
\end{cases}
$$
$$
[\widetilde\vg_2]_j=\begin{cases}
  \sqrt{M}\mu_3 &    \mbox{ for $1\leq j\leq N$} \\
 \sqrt{M}\mu_4  \hspace{2cm} & \mbox{ for $N+1\leq j\leq 2N$}
\end{cases}
$$
So $\widetilde\mG=\sqrt{M}\begin{bmatrix}
  \mu_1 & \mu_3 \\
  \mu_1 & \mu_3 \\
  \vdots & \\
\mu_1 & \mu_3 \\
\mu_2 & \mu_4 \\
  \vdots & \\
\mu_2 & \mu_4
\end{bmatrix}$ and $\widetilde\mR_0=\widetilde\mG^*\widetilde\mG=\begin{bmatrix}
  \|\widetilde\vg_1\|^2 & \widetilde\vg_1^*\widetilde\vg_2 \\
  \widetilde\vg_1^*\widetilde\vg_2 & \|\widetilde\vg_2\|^2
\end{bmatrix}=MN\begin{bmatrix}
\mu_1^2+\mu_2^2 &  \mu_1\mu_3+\mu_2\mu_4\\
\mu_1\mu_3+\mu_2\mu_4 & \mu_3^2+\mu_4^2
\end{bmatrix}$.

Denote by $\la_1\geq \la_2$ the largest singular values of $\mDD$.
\begin{proposition}
{  Suppose} $\widetilde\vg_1$ and $\widetilde\vg_2$ are linearly independent and either  $\widetilde\vg_1^*\widetilde\vg_2\ne0$, or if
   $\widetilde\vg_1^*\widetilde\vg_2=0$ then $\|\widetilde\vg_1\|\ne\|\widetilde\vg_2\|$. Then
 there exist constants $c_1>c_2$ such that
\begin{equation}
  \label{Warm-up-CLT}
  (\la_1-c_1\sqrt{MN}, \la_2-c_2\sqrt{MN})\toD (Z_1,Z_2),
\end{equation}
  where $(Z_1,Z_2)$ is a (noncentered) bivariate normal random variable.

\end{proposition}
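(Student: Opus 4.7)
The plan is to invoke Theorem \ref{Thm:expansion} on the $(2M)\times(2N)$ matrix $\mDD$ and then establish joint asymptotic normality of the resulting fluctuation terms by the Cram\'er--Wold device combined with Lyapunov's CLT. First I verify Assumption \ref{A1}: orthonormality of $\vf_1,\vf_2$ is immediate, and introducing $\mQ_0:=\bigl[\begin{smallmatrix}\mu_1^2+\mu_2^2 & \mu_1\mu_3+\mu_2\mu_4\\ \mu_1\mu_3+\mu_2\mu_4 & \mu_3^2+\mu_4^2\end{smallmatrix}\bigr]$ one has $\mR_0=MN\,\mQ_0$ \emph{exactly}, so with the theorem's $(2M)(2N)$ normalization the limit $\mQ=\tfrac14\mQ_0$ is attained with no error. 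Lagrange's identity gives $\det\mQ_0=(\mu_1\mu_4-\mu_2\mu_3)^2$, which is positive precisely when $\vg_1,\vg_2$ are linearly independent; the characteristic discriminant $(\mu_1^2+\mu_2^2-\mu_3^2-\mu_4^2)^2+4(\mu_1\mu_3+\mu_2\mu_4)^2$ vanishes only when $\|\vg_1\|=\|\vg_2\|$ and $\vg_1^*\vg_2=0$---precisely the excluded case. Hence $\mQ$ has distinct, strictly positive eigenvalues $\gamma_1>\gamma_2$.

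Next, Theorem \ref{Thm:expansion} yields $\la_r=\sqrt{\rho_r\topp N}+Z_r\topp N+m_r\topp N+\eps_r\topp N$ with $\eps_r\topp N\to 0$ in probability. Because $\mR_0=MN\,\mQ_0$ exactly, $\sqrt{\rho_r\topp N}=2\sqrt{\gamma_r MN}$ exactly, so taking $c_r:=2\sqrt{\gamma_r}$ gives $\la_r-c_r\sqrt{MN}=Z_r\topp N+m_r\topp N+\eps_r\topp N$. In this block setting $\mD_R$ and $\mD_S$ are diagonal with only finitely many distinct values (determined by $\sigma_1^2,\dots,\sigma_4^2$), so $\mathbf{\Sigma}_S$ and $\mathbf{\Sigma}_R/(MN)$ have explicit limits and $m_r\topp N\to m_r$ for constants $m_r$ computable in closed form.

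The substantive step is the joint CLT for $(Z_1\topp N,Z_2\topp N)$. Writing $\mZ_0=\tfrac{1}{2\sqrt{MN}}\mG^*\mC^*\mF$, the formula $Z_r\topp N=\tfrac{1}{\sqrt{\gamma_r}}\vv_r^*\mZ_0\vv_r$ expands as a deterministic linear combination $Z_r\topp N=\sum_{i,j}b_{i,j}\topp{r,N}X_{i,j}$ of the independent entries of $\mC$, with coefficients of order $O((MN)^{-1/2})$ (using $\|\vv_r\|=1$, $|[\vf_t]_i|=O(M^{-1/2})$, $|[\vg_s]_j|=O(M^{1/2})$). For any $(\tau_1,\tau_2)\in\RR^2$ the combination $\tau_1 Z_1\topp N+\tau_2 Z_2\topp N$ is then a sum of independent centered random variables with uniformly bounded fourth moments. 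Its variance $\sum_{i,j}(\tau_1 b_{i,j}\topp{1,N}+\tau_2 b_{i,j}\topp{2,N})^2\sigma_{i,j}^2$ converges to a bilinear form in $(\tau_1,\tau_2)$ by a routine block-by-block calculation, while the Lyapunov ratio is $O((MN)^{-1})\to 0$. Cram\'er--Wold then yields joint asymptotic normality of $(Z_1\topp N,Z_2\topp N)$, and together with $m_r\topp N\to m_r$ this produces the noncentered bivariate normal limit in \eqref{Warm-up-CLT}.

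The hardest part will be purely bookkeeping: identifying the limiting covariance of $(Z_1,Z_2)$ by summing $b_{i,j}\topp{r,N}b_{i,j}\topp{r',N}\sigma_{i,j}^2$ over the four $M\times N$ blocks and keeping track of the four variances $\sigma_k^2$ is tedious but direct. A pleasant simplification is that $\mR_0/(4MN)\equiv\mQ$ for every $N$, so the eigenvectors $\vv_1,\vv_2$ are constant in $N$ and no convergence-of-eigenvectors argument is needed.
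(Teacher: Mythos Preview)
Your proposal is correct and follows the same overall strategy as the paper: verify Assumption~\ref{A1}, apply Theorem~\ref{Thm:expansion}, observe that $\rho_r\topp N=4\gamma_r MN$ exactly so $c_r=2\sqrt{\gamma_r}$, check that the shifts $m_r\topp N$ are constant in $N$, and establish a CLT for the fluctuation terms.

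The one tactical difference is in how the joint CLT is obtained. You expand $Z_r\topp N$ as a single sum over all $4MN$ entries and invoke Cram\'er--Wold plus Lyapunov. The paper instead exploits the block structure directly: it observes that the four entries of $\mZ_0$ are linear combinations of the four \emph{independent} normalized block sums $\tfrac{1}{2\sqrt{MN}}\sum_{(i,j)\in\text{block }k}X_{i,j}$, each of which converges marginally to $\sigma_k\zeta_k$ by the ordinary CLT; independence of the blocks then gives joint convergence of $\mZ_0$ to an explicit $2\times 2$ Gaussian matrix in terms of four independent standard normals $\zeta_1,\dots,\zeta_4$, and $(Z_1,Z_2)$ is read off via the continuous mapping theorem (the $\vv_r$ being fixed). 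Your route is more general-purpose; the paper's route avoids Cram\'er--Wold entirely and yields a concrete representation of the limiting covariance without the ``tedious bookkeeping'' you anticipate---the four blocks decouple, so the covariance is immediately a sum of four rank-one contributions.
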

\begin{proof}
 To use Theorem \ref{Thm:expansion}, we first verify that Assumption \ref{A1} holds. We have
$$
\widetilde\mR_0=4MN \mQ, \quad \mbox{ where }  \mQ=\frac14\begin{bmatrix}
\mu_1^2+\mu_2^2 &  \mu_1\mu_3+\mu_2\mu_4\\
\mu_1\mu_3+\mu_2\mu_4 & \mu_3^2+\mu_4^2
\end{bmatrix}.
$$

Noting that $\det(\mQ)=\left(\mu _2 \mu _3-\mu _1 \mu _4\right)^2/16$, we see that $\gamma_1\geq\gamma_2>0$ provided that $\det\begin{bmatrix}
\mu_1 & \mu_2 \\
\mu_3 & \mu_4\end{bmatrix}\ne 0$, i.e. provided that $\widetilde\vg_1$ and $\widetilde\vg_2$ are linearly independent.
The eigenvalues of $\mQ$ are
$$
   \gamma_1^2=\frac{1}{8} \left(\mu
   _1^2+\mu _2^2+\mu _3^2+\mu _4^2+\sqrt{\left(\left(\mu _2+\mu _3\right){}^2+\left(\mu
   _1-\mu _4\right){}^2\right) \left(\left(\mu _2-\mu _3\right){}^2+\left(\mu _1+\mu
   _4\right){}^2\right)}\right),$$
$$\gamma_2^2=\frac{1}{8} \left(\mu _1^2+\mu _2^2+\mu _3^2+\mu _4^2-\sqrt{\left(\left(\mu _2+\mu
   _3\right){}^2+\left(\mu _1-\mu _4\right){}^2\right) \left(\left(\mu _2-\mu
   _3\right){}^2+\left(\mu _1+\mu _4\right){}^2\right)}\right),$$
so condition $\gamma_1>\gamma_2$ is satisfied except when $\mu_1=\pm \mu_4$ and $\mu_2=\mp\mu_3$, i.e. except when
 $\widetilde\vg_1$ and $\widetilde\vg_2$ are orthogonal and of the same length.

We see that $\rho_r\topp N=2\gamma_r\sqrt{MN}$, which determines the constants $c_r=2\gamma_r$ for
\eqref{Warm-up-CLT}. Next, we determine the remaining significant terms in \eqref{eq:la-expand}. First, we check that the
shifts $m_r\topp N$ in \eqref{eq:la-expand} do not depend on $N$. To do  so we use \eqref{mmm-b} with matrices
$$
\widetilde{\mathbf{\Sigma}}_R=\frac{MN}{2}\begin{bmatrix}
\mu_1^2(\sigma_1^2+\sigma_3^2) + \mu_2^2(\sigma_2^2+\sigma_4^2) & \mu_1\mu_3(\sigma_1^2+\sigma_3^2) + \mu_2\mu_4(\sigma_2^2+\sigma_4^2) \\
\mu_1\mu_3(\sigma_1^2+\sigma_3^2) + \mu_2\mu_4(\sigma_2^2+\sigma_4^2) & \mu_3^2(\sigma_1^2+\sigma_3^2) + \mu_4^2(\sigma_2^2+\sigma_4^2) \\
\end{bmatrix}
$$
and
$$
\widetilde{\mathbf{\Sigma}}_S=\frac12\begin{bmatrix}
\sigma_1^2+\sigma_2^2  & 0 \\
0 &  \sigma_3^2+\sigma_4^2  \\
\end{bmatrix},
$$
which follow from
$$[\mD_R]_{j,j}= \begin{cases}
(\sigma_1^2+\sigma_3^2)/2 & j\leq N  \\
 (\sigma_2^2+\sigma_4^2)/2 & N+1\leq j \leq 2N
\end{cases} \mbox{ and }
[\mD_S]_{i,i}=\begin{cases}
  (\sigma_1^2+\sigma_2^2)/2 & i\leq M \\
  (\sigma_3^2+\sigma_4^2)/2& M+1\leq i\leq 2M
\end{cases}
$$

To verify normality of the limit, we show that the matrix $\mZ_0$ is asymptotically centered normal, so formula \eqref{eq:Z_r}
gives a bivariate normal distribution in the limit. Denoting as previously by $X_{i,j}$ the entries of matrix $\mC=\mDD-\E\mDD$,
 \eqref{eq:tildeZ_0} gives

\begin{multline*}
\widetilde\mZ_0=\frac{1}{2\sqrt{MN}}\begin{bmatrix} \mu_1 \displaystyle\sum_{i=1}^M\sum_{j=1}^NX_{i,j} + \mu_2
\sum_{i=1}^M\sum_{j=N+1}^{2N}X_{i,j} & \displaystyle\mu_1 \sum_{i=M+1}^{2M}\sum_{j=1}^{N}X_{i,j} + \mu_2 \sum_{i=M+1}^{2M}\sum_{j=N+1}^{2N}X_{i,j}\\ & & \\
\displaystyle \mu_3 \sum_{i=1}^{M}\sum_{j=1}^NX_{i,j} +
\mu_4\sum_{i=1}^{M}\sum_{j=N+1}^{2N}X_{i,j}
  & \displaystyle\mu_3 \sum_{i=M+1}^{2M}\sum_{j=1}^NX_{i,j} + \mu_4\sum_{i=M+1}^{2M}\sum_{j=N+1}^{2N}X_{i,j} \\
\end{bmatrix}\\
\toD \tfrac12\begin{bmatrix}
  \mu_1\sigma_1 \zeta_1 +  \mu_2\sigma_2  \zeta_2  & \mu_1\sigma_3 \zeta_3 +  \mu_2\sigma_4  \zeta_4\\
  \mu_3\sigma_1 \zeta_1 +  \mu_4\sigma_2 \zeta_2   &  \mu_3\sigma_3 \zeta_3 +  \mu_4\sigma_4  \zeta_4\\
\end{bmatrix}
\end{multline*}
with independent $N(0,1)$ random variables $\zeta_1,\dots,\zeta_4$.

In particular, the limit $(Z_1,Z_2)=(m_1,m_2)+(Z_1^\circ,Z_2^\circ)$ is normal with mean given by \eqref{mmm-b}, and centered
bivariate  normal random variable
$$Z_1^\circ=\tfrac1{2{\gamma_1}} \vv_1^* \widetilde\mZ_0\vv_1, \quad
Z_2^\circ=\tfrac1{2{\gamma_2}} \vv_2^*\widetilde\mZ_0\vv_2.
 $$

\end{proof}

\subsubsection{Numerical example and simulations}\label{Sect:numeric}
For a
numerical example, suppose $\sigma_j^2=\sigma^2$, $\mu_j=\mu$, except $\mu_1=0$. Then $\gamma_1^2= \frac{\mu^2}{8}
\left(3+\sqrt{5}\right), \gamma_2^2=\frac{\mu^2}{8}
   \left(3-\sqrt{5}\right)$ and
   $$\vv_1=\frac{1}{\sqrt{10}}\begin{bmatrix}
 \sqrt{ 5-\sqrt{5}  } \\ \\
     \sqrt{ 5+\sqrt{5} }
   \end{bmatrix}\approx \begin{bmatrix}
     0.525731\\ 0.850651
   \end{bmatrix}$$
$$\vv_2=\frac{1}{\sqrt{10}}\begin{bmatrix}
    \sqrt{ 5+\sqrt{5}}\\ \\
-   \sqrt{5-\sqrt{5}}
   \end{bmatrix}\approx \begin{bmatrix}
  0.850651\\ - 0.525731
   \end{bmatrix}$$

$$\widetilde{\mathbf{\Sigma}}_S=\sigma^2\mI_2, \quad \widetilde{\mathbf{\Sigma}}_R= M N \mu ^2\sigma^2\left[
\begin{array}{cc}
1 &1 \\
1 & 2  \\
\end{array}
\right]$$ so with $c=M/N$, formula \eqref{mmm-b} gives
 $$m_1 =\frac{(\sqrt{5}-1)\sigma^2(M+N)}{2\mu\sqrt{MN}} ,\quad
m_2= \frac{ (\sqrt{5}+1)\sigma^2(M+N)}{2\mu\sqrt{MN}}.$$

We get

$$
Z_1^\circ=\frac{\left(2 \sqrt{5} \zeta_1+\left(5+\sqrt{5}\right)
   \zeta_2+\left(5+\sqrt{5}\right) \zeta_3+\left(5+3 \sqrt{5}\right)
   \zeta_4\right) }{5 \left(1+\sqrt{5}\right)
   }  \sigma ,
$$
$$
Z_2^\circ=\frac{\left(-\left(5+\sqrt{5}\right) \zeta_1+2 \sqrt{5} \zeta_2+2
   \sqrt{5} \zeta_3+\left(\sqrt{5}-5\right) \zeta_4\right)
   }{10 } \sigma.
$$

 Thus $\la_1-\mu\sqrt{\tfrac12MN(3+\sqrt{5})},\la_2-\mu\sqrt{\tfrac12MN(3-\sqrt{5})}$ is approximately normal with mean $(m_1,m_2)$ and
 covariance matrix
 $
\sigma^2\mI_2
 $.
In particular, if the entries of matrices are independent uninform $U(-1,1)$ for block $\mA_1$ and $U(0,2)$ for blocks $\mA_2,\mA_3,\mA_4$, then $\sigma^2=1/3, \mu=1$. So
 with $M=20,N=50$  we get
$$\la_1\approx 51.6228
+\frac1{\sqrt{3}}\zeta_1 ,  \; \la_2\approx 20.7378+\frac{1}{\sqrt{3}}\zeta_2$$ with (new) independent normal N(0,1) random variables
$\zeta_1,\zeta_2$. %
Figure \ref{F1} show the result of simulations for two sets of choices of $M,N$.

\pgfdeclareimage[width=3in]{HL}{warmup-large}
\pgfdeclareimage[width=3in]{HS}{warmup-small}

\begin{figure}[H]
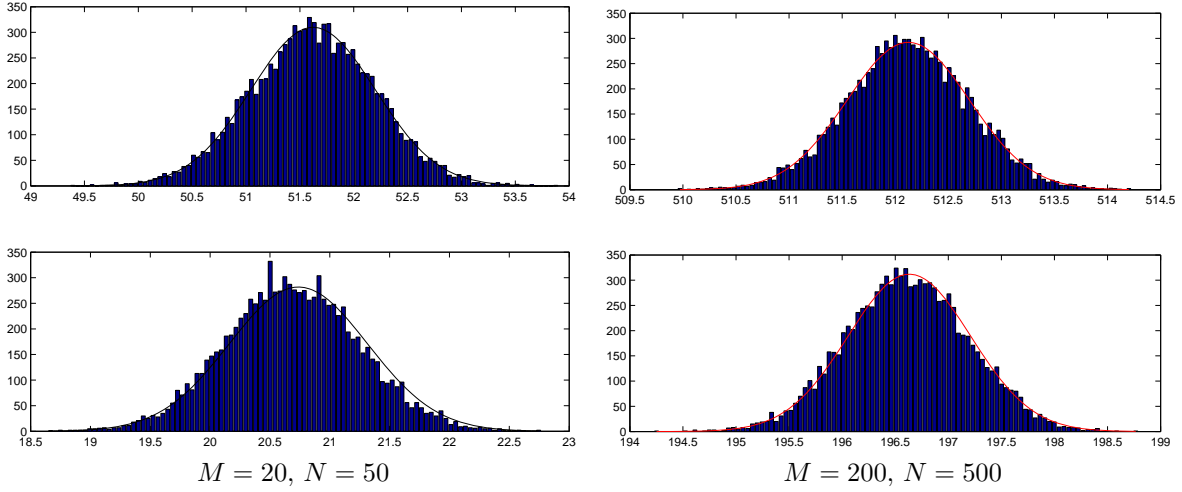

  \begin{center}
     \begin{tabular}{ccc}
      \pgfuseimage{HS} &\pgfuseimage{HL} \\
      $M=20$, $N=50$ & $M=200$, $N=500$
     \end{tabular}
  \end{center}

\caption{Histograms of simulations of 10,000 realizations, overlayed with normal density of variance $1/3$. Top row: Largest singular value; Second row: second singular value.
For small $N$, additional poorly
controlled error arises from  $\eps\topp N\to 0$ in probability. \label{F1}}
\end{figure}

\subsection{Application to a model in population genetics}\label{Sect: SNP}

Following  \cite{bryc2013separation} (see also \cite{Patterson2006}), we consider  an  $M\times N$ array $\mDD$ of genetic markers  with rows labeled by individuals
 and columns labeled by polymorphic markers. The entries $[\mDD]_{i,j}$ are the number of alleles for marker $j$,
 individual $i$, are assumed independent, and take values $0,1,2$ with probabilities $(1-p)^2, 2p(1-p), p^2$ respectively, where $p$ is the frequency of the $j$-th allele.
 We assume that we have data for $M$ individuals from $K$ subpopulation and that we have $M_r$ individuals from the subpopulation labeled $r$.
For our asymptotic analysis where $N\to \infty$ we assume \eqref{eq:M/N} and that each subpopulation is sufficiently represented in the data so that
\begin{equation*}
  \label{eq:Mr/N} M_r/N\to c_r>0,
\end{equation*}
where of course $c_1+\dots+c_K=c$. (Note that our notation for $c_r$ is slightly different than the notation in \cite{bryc2013separation}.)
We assume that allelic frequency for the $j$-th marker depends on the subpopulation of which the individual is a member but does not depend on the individual otherwise.
Thus with the $r$-th subpopulation we associate  the  vector
$\vp_r\in(0,1)^N$ of allelic probabilities, where  $p_r(j):=[\vp_r]_j$ is the value of $p$ for the $j$-th marker, $j=1,2,\dots,N$.

We further assume that the allelic frequencies are fixed, but arise  %
from some regular mechanism,
 which guarantees that for $d=1,2,3,4$ the following limits exist
\begin{equation*}\label{a:p-conv}
\lim_{N\to\infty} \frac1N \sum_{j=1}^N  p_{r_1}(j)p_{r_2}(j)\dots p_{r_d}(j)= \pi_{r_1,r_2,\dots ,r_d},\; 1\leq r_1\leq\dots\leq r_d\leq K.
\end{equation*}
This holds if the allelic probabilities $p_r(j)$ for the
$r$-th population arise in ergodic fashion from  joint allelic spectrum $\varphi(x_1,\dots,x_K)$  \cite{Kimura:1964} with
\begin{equation}\label{spectrum}
\pi_{r_1,r_2,\dots ,r_d}=\int_{[0,1]^K} x_{r_1} \dots x_{r_d}\varphi(x_1,\dots,x_K)d x_1\dots d x_K.
\end{equation}

Under the above assumptions, the entries of $\mDD$ are independent Binomial random variables with the same number of  trials 2, but with varying probabilities of success.
Using the assumed distribution of the entries of $\mDD$ we have $\mB=\E\mDD=2\sum_{r=1}^K \widetilde\ve_r\vp_r^*$, where $\widetilde\ve_r$ is the vector indicating the locations of the members of the
$r$-th subpopulation, i.e. $[\widetilde\ve_r]_i=1$ when the $i$-th individual is a member of the $r$-th subpopulation.
Assuming the entries of $\mDD$ are independent, we get %
$\mB=\sum_{r=1}^K \widetilde\vf_r\widetilde\vg_r^*=\widetilde\mF\widetilde\mG^*$  with orthonormal vectors
$\widetilde\vf_r=\widetilde\ve_r/\sqrt{M_r}$ and with $\widetilde\vg_s=2\sqrt{M_s}\vp_s$, so we have \eqref{B-from-tilde}.
In this setting, Remark \ref{Remark-Q} applies. In \eqref{R0}, we have $[\widetilde\mR_0]_{r,s}=4\sqrt{M_rM_s}\vp_r^*\vp_s$
and   $\widetilde\mR_0/(MN)\to \mQ$, where
\begin{equation}\label{mQ-gen}
[\mQ]_{r,s}:= 4\frac{\sqrt{c_rc_s}}{c}\pi_{r,s},
\end{equation}
so the eigenvalues of $\widetilde\mR_0$ are $\rho_r^2\sim \gamma_r^2  MN +o(N^2)$.
As previously, we assume that $\mQ$
 has positive and distinct eigenvalues $\gamma_1^2>\gamma_2^2>\dots>\gamma_K^2>0$ with corresponding eigenvectors
 $\vv_1,\dots,\vv_k\in\RR^K$.
(Due to change of notation, matrix \eqref{mQ-gen} differs  from \cite[(2.6)]{bryc2013separation}  by a  factor of 4.)

 To state the result, for $1\leq t\leq K$    we introduce matrices ${\mathbf{\Sigma}}_t\in\calM_{K\times K}$ with entries
\begin{equation}\label{Sigma_k}
[{\mathbf{\Sigma}}_t]_{r,s}= \frac{\sqrt{c_rc_s}}{c}(\pi_{r,s,t}-\pi_{r,s,t,t}).
\end{equation}

\begin{proposition}
  \label{T-gen}
The  $K$ largest singular values of $\mDD       $ are approximately normal,
\begin{equation*}
\begin{bmatrix}
  \la_1\topp N-{\rho_1\topp N} \\
\la_2\topp N-{\rho_2\topp N} \\
\vdots \\
\la_K\topp N-{\rho_K\topp N}
\end{bmatrix} \toD \begin{bmatrix}
  m_1\\m_2\\\vdots \\ m_K
\end{bmatrix}+\begin{bmatrix}
  \zeta_1 \\\zeta_2\\\vdots\\\zeta_K
\end{bmatrix}
\end{equation*}
where
\begin{equation}
  \label{m_r}
  m_r= \frac{1}{ \sqrt{c} \gamma_r }\sum_{t=1}^K  [\vv_r]_t^2(\pi_t-\pi_{t,t})+
  \frac{4\sqrt{c}}{\gamma_r^{3}}\sum_{t=1}^K \frac{c_t}{c}\vv_r^*\mathbf{\Sigma}_t \vv_r
\end{equation}
and
$(\zeta_1,\dots,\zeta_K)$ is centered multivariate normal with   covariance
\begin{equation*}\label{zeta-cov}
  \E (\zeta_r\zeta_s)=\frac{8}{ {\gamma_r\gamma_s} } \sum_{t=1}^K [\vv_r]_t[\vv_s]_t\vv_r^*\mathbf{\Sigma}_t\vv_s.
\end{equation*}

\end{proposition}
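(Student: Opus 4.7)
The plan is to apply Theorem \ref{Thm:expansion} directly: verify the two standing assumptions, pass to the limit in the deterministic shift $m_r\topp N$, and establish joint asymptotic normality of the fluctuation vector $(Z_1\topp N,\dots,Z_K\topp N)$ via a multivariate Lyapunov central limit theorem. Throughout, let $t(i)\in\{1,\dots,K\}$ denote the subpopulation containing the $i$-th individual, so that the centered entries $X_{i,j}=[\mDD]_{i,j}-2p_{t(i)}(j)$ are bounded by $2$ and have variance $\sigma_{i,j}^2=2p_{t(i)}(j)(1-p_{t(i)}(j))$.

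Assumption \ref{A1} is easily checked: the vectors $\vf_r=\ve_r/\sqrt{M_r}$ have disjoint supports, hence are orthonormal; the convergence $\mR_0/(MN)\to\mQ$ with \eqref{mQ-gen} follows from $[\mR_0]_{r,s}=4\sqrt{M_rM_s}\,\vp_r^*\vp_s$ together with \eqref{eq:Mr/N} and \eqref{a:p-conv} for $d=2$; part (iii) is hypothesized. Since $|X_{i,j}|\leq 2$, condition \eqref{Assume-tail} is immediate. Next, $\mathbf{\Sigma}_S=\mF^*\mD_S\mF$ is diagonal because the $\vf_r$ have disjoint supports, with $[\mathbf{\Sigma}_S]_{r,r}=\tfrac{1}{N}\sum_j 2p_r(j)(1-p_r(j))\to 2(\pi_r-\pi_{r,r})$ by \eqref{a:p-conv}, which produces the first term in \eqref{m_r}. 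A direct expansion gives
\[
[\mathbf{\Sigma}_R]_{r,s}=\frac{8\sqrt{M_rM_s}}{M}\sum_{t=1}^K M_t\sum_{j=1}^N p_r(j)p_s(j)p_t(j)(1-p_t(j)),
\]
and applying \eqref{a:p-conv} with $d=3,4$ together with $M_r/N\to c_r$ shows that $\tfrac{1}{MN}\mathbf{\Sigma}_R\to\tfrac{8}{c}\sum_t c_t\mathbf{\Sigma}_t$; substituting into \eqref{mmm} and simplifying yields exactly the second term of \eqref{m_r}, so $m_r\topp N\to m_r$.

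For the fluctuations, write
\[
Z_r\topp N=\frac{1}{\sqrt{\gamma_r}}\vv_r^*\mZ_0\vv_r=\sum_{i=1}^M\sum_{j=1}^N a_{i,j}^{(r)}X_{i,j},\qquad a_{i,j}^{(r)}=\frac{2[\vv_r]_{t(i)}}{\sqrt{\gamma_r M_{t(i)} MN}}\sum_{s=1}^K[\vv_r]_s\sqrt{M_s}\,p_s(j),
\]
so each $Z_r\topp N$ is a sum of independent, centered, bounded random variables with coefficients of size $O(1/N)$. For any real $\lambda_1,\dots,\lambda_K$, the Lyapunov ratio for $\sum_r\lambda_r Z_r\topp N$ is of order $N^2\cdot N^{-4}/O(1)=O(N^{-2})\to 0$, so the Cram\'er--Wold device and Lyapunov's theorem \cite[Theorem 27.3]{Billingsley} yield joint convergence to a centered multivariate normal. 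The covariance $\E[Z_r\topp N Z_{r'}\topp N]=\sum_{i,j}a_{i,j}^{(r)}a_{i,j}^{(r')}\sigma_{i,j}^2$ is evaluated by summing over $i$ first, so that $\sum_{i:t(i)=t}1=M_t$ cancels one power of $M_t$ from the denominators, and then using \eqref{a:p-conv} with $d=4$ to replace $\tfrac{1}{N}\sum_jp_s(j)p_{s'}(j)p_t(j)(1-p_t(j))$ by $\pi_{s,s',t}-\pi_{s,s',t,t}$; the remaining $\sqrt{M_sM_{s'}}/M$ factors produce $[\mathbf{\Sigma}_t]_{s,s'}$, giving \eqref{zeta-cov}.

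The main obstacle is purely the bookkeeping in the covariance step: one must correctly identify which factor $p_\bullet(j)$ arises from $\vg_s$, which from the variance $\sigma_{i,j}^2$, and which from the averaging in $\mD_R$, in order to pair them into the third and fourth moments $\pi_{r,s,t}$, $\pi_{r,s,t,t}$ that define the matrices $\mathbf{\Sigma}_t$ in \eqref{Sigma_k}. Once the pairing is recognized, each limiting quantity is simply read off from \eqref{a:p-conv}.
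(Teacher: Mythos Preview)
Your proof is correct and follows essentially the same route as the paper: verify the assumptions of Theorem \ref{Thm:expansion}, compute the limits of $\mathbf{\Sigma}_S$ and $\tfrac{1}{MN}\mathbf{\Sigma}_R$ to obtain \eqref{m_r}, and establish asymptotic normality of the fluctuation terms via Lyapunov's theorem. The only organizational difference is that the paper first proves asymptotic normality of the full matrix $\mZ_0$ column by column---exploiting that the $\vf_t$ have disjoint supports, so the columns of $\mC^*\mF$ (hence of $\mZ_0$) are independent---and then reads off the joint law of $(Z_1,\dots,Z_K)$ as a continuous image, whereas you apply Cram\'er--Wold directly to the vector $(Z_1\topp N,\dots,Z_K\topp N)$; both arguments reduce to the same Lyapunov estimate and the same covariance computation.
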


\begin{proof}
We apply Theorem \ref{Thm:expansion} in the form stated as Remarks \ref{Remark-tilde-all} and \ref{Remark-Q}. The first step is to note that due to the form of vectors $\widetilde\vf_k$,
equation \eqref{eq:tildeZ_0} gives
 a matrix $\widetilde\mZ_0$ with independent columns. Our first task is to show that $\widetilde\mZ_0$ is asymptotically normal by verifying that each  of its independent columns is asymptotically normal.

 Denote  by $\calN_k$ the index set for the $k$-th subpopulation (i.e., $[\widetilde\ve_k]_i=1$ if $i\in\calN_k$).
 In this notation, the $k$-th column of $\widetilde\mZ_0$ is

$$ \frac{2}{\sqrt{MNM_k}}\begin{bmatrix}
 \sqrt{M_1} \sum_{i\in \calN_k} \sum_{j=1}^N X_{i,j} p_1(j) \\
 \sqrt{M_2} \sum_{i\in \calN_k} \sum_{j=1}^N X_{i,j} p_2(j) \\
\vdots \\
 \sqrt{M_K} \sum_{i\in \calN_k} \sum_{j=1}^N X_{i,j} p_K(j)
\end{bmatrix}.
$$

To verify asymptotic normality and find the covariance, we fix $\vt=[t_1,\dots, t_K]^*$. Then the dot product of $\vt$   with the $k$-th column of $\widetilde\mZ_0$ is

\begin{equation*}
  \label{S_N}
  S_N=\sum_{i\in \calN_k} \sum_{j=1}^N a_j(N) X_{i,j}
\end{equation*}
with
$$a_j(N)=\frac{2}{\sqrt{MNM_k}}\sum_{r=1}^K \sqrt{M_r} t_r p_r(j).$$
We first note that by independence
\begin{multline*}
\Var(S_N)=\sum_{i\in \calN_k} \sum_{j=1}^N a_j^2(N) \E X_{i,j}^2 \\ =8 \sum_{r_1,r_2=1}^K t_{r_1}t_{r_2}\frac{\sqrt{M_{r_1}M_{r_2}}}{M}\frac1N\sum_{j=1}^N p_{r_1}(j)p_{r_2}(j) p_k(j)(1-p_k(j))
\\ \to 8 \sum_{r_1,r_2=1}^K \frac{\sqrt{c_{r_1}c_{r_2}}}{c}\left(\pi_{r_1,r_2,k}-\pi_{r_1,r_2,k,k}\right)t_{r_1}t_{r_2}=8\vt^* \mathbf{\Sigma}_k \vt
\end{multline*}
giving the covariance matrix for the $k$-column  as 8 times \eqref{Sigma_k}.

Next we note that since $E(X^4_{i,j})=2p_k(j)(1-p_k(j))$, we have
\begin{multline*}
\sum_{i\in \calN_k} \sum_{j=1}^N a_j^4 (N)\E X_{i,j}^4 \\
 = \frac{32}{N M_k}\sum_{r_1,r_2,r_3,r_4=1}^K \frac{\sqrt{M_{r_1}M_{r_2}M_{r_3}M_{r_4} }}{M^2}t_{r_1}t_{r_2}t_{r_3}t_{r_4}
\frac1N\sum_{j=1}^Np_{r_1}(j)p_{r_2}(j)p_{r_3}(j)p_{r_4}(j)p_k(j)(1-p_k(j))=O(1/N^2)\to 0.
\end{multline*}
By Lyapunov's theorem    $S_N$ is asymptotically normal.
Thus  the $k$-th column of $\widetilde\mZ_0$ is asymptotically normal with covariance  8 times
\eqref{Sigma_k}. Let %
$\mZ_0\topp\infty$ denote the distributional limit of  $\widetilde\mZ_0$.

From \eqref{eq:Z_r-b} with $\widetilde\vu_r$ replaced by $\vv_r$ as in Remark \ref{Remark-Q}, we see that $(Z_1\topp N, \dots, Z_K\topp N)$ converges in distribution to the multivariate
 normal r.v. $ (\zeta_1,\dots,\zeta_K)$ with  covariance %
$$\E(\zeta_r\zeta_s)=\frac{1}{{\gamma_r\gamma_s} } \E\left( \vv_r^*\mZ_0\topp\infty\vv_r\vv_s^*\mZ_0\topp\infty\vv_s\right)
=
\frac{8}{{\gamma_r\gamma_s} }\sum_{t=1}^K [\vv_r]_t[\vv_s]_t\vv_r^*\mathbf{\Sigma}_t\vv_s.$$

Next, we use  formula \eqref{mmm-b} to compute the shift.
We first compute $\widetilde{\mathbf{\Sigma}}_S=\E (\widetilde\mF^*\mC\mC^*\widetilde\mF)/N$.
As already noted,  $\mC^*\widetilde\mF\in\calM_{ N\times K}$ has $K$ independent columns, with the $k$-th column

$$ \frac{1}{\sqrt{M_k}}\begin{bmatrix}
  \sum_{i\in \calN_k}   X_{i,1}   \\
   \sum_{i\in \calN_k}   X_{i,2}  \\
\vdots \\
 \sum_{i\in \calN_k} \ X_{i,N}
\end{bmatrix}.
$$
So  $\widetilde{\mathbf{\Sigma}}_S$ is a diagonal matrix with
$$[\widetilde{\mathbf{\Sigma}}_S]_{rr}=\frac2N\sum_{j=1}^Np_r(j)(1-p_r(j))\to 2(\pi_r-\pi_{rr}).
$$
Next, we compute the limit of
$$\frac{c}{MN}\widetilde{\mathbf{\Sigma}}_R=\frac{c}{M^2N}\E (\widetilde\mG^*\mC^*\mC\widetilde\mG)\sim \frac{1}{M N^2}\E (\widetilde\mG^*\mC^*\mC\widetilde\mG).$$
Since
$$[\E (\widetilde\mG^*\mC^*\mC\widetilde\mG)]_{rs}=2 \sum_{t=1}^K M_t\sum_{j=1}^N [\vg_s]_j[\vg_r]_jp_t(j)(1-p_t(j))=
8 \sqrt{M_rM_s}\sum_{t=1}^K M_t\sum_{j=1}^N p_s(j)p_r(j)p_t(j)(1-p_t(j))$$
we see that
$$\left[\frac{c}{MN}\widetilde{\mathbf{\Sigma}}_R\right]_{rs} \to  \frac{8 \sqrt{c_rc_s}}{c} \sum_{t=1}^K c_t  (\pi_{r,s,t}-\pi_{r,s,t,t}) .$$
This shows that
$$
\frac{c}{MN}\widetilde{\mathbf{\Sigma}}_R  \to 8\sum_{t=1}^K c_t \mathbf{\Sigma}_t.
$$
From \eqref{mmm-b}  we calculate
$$
  m_r=\sum_{s=1}^K\left( \frac{1}{\sqrt{c }\gamma_r} [\vv_r]_s^2(\pi_s-\pi_{s,s})+
  \frac{4 c_s}{c^{3/2}\gamma_r^{3}}\sum_{t_1,t_2=1}^K [\vv_r]_{t_1} [\vv_r]_{t_2} \sqrt{c_{t_1}c_{t_2}} (\pi_{t_1,t_2,s}-\pi_{t_1,t_2,s,s}) \right)
$$
which is \eqref{m_r}.
\end{proof}

Ref. \cite{bryc2013separation} worked  with the eigenvalues of the 
"sample covariance matrix" $(\mDD \mDD^*)/(\sqrt{M}+\sqrt{N})^2$,
i.e., with the normalized squares of singular values
$\La_r=\la_r^2/(\sqrt{M}+\sqrt{N})^2$.
Proposition \ref{T-gen} then gives  the following normal approximation.
\begin{proposition}\label{Prop-separation}
$$\begin{bmatrix}
  \La_1-\frac{(\rho_1\topp N)^2}{(\sqrt{M}+\sqrt{N})^2} \\ \\
  \La_2-\frac{(\rho_2\topp N)^2}{(\sqrt{M}+\sqrt{N})^2} \\ \\
  \vdots \\
  \La_K-\frac{(\rho_K\topp N)^2}{(\sqrt{M}+\sqrt{N})^2} \\
\end{bmatrix} \toD
\begin{bmatrix}
  \tilde m_1\\ \tilde m_2\\\vdots \\\tilde  m_K
\end{bmatrix}+\begin{bmatrix}
  Z_1 \\Z_2\\\vdots\\Z_K
\end{bmatrix}
$$
with recalculated shift
\begin{equation*}
  \label{m_r+}
  \tilde m_r= \frac{2}{ (1+c^{1/2})^2}\sum_{t=1}^K  [\vv_r]_t^2(\pi_t-\pi_{t,t})+
  \frac{8c }{\gamma_r^2 (1+c^{1/2})^2 }\sum_{t=1}^K \frac{c_t}{c}\vv_r^*\mathbf{\Sigma}_t \vv_r
\end{equation*}
and with recalculated centered  multivariate normal {random vector}
$(Z_1,\dots,Z_K)$  with  covariance
\begin{equation*}\label{zeta-cov+}
  \E (Z_rZ_s)=\frac{32c}{(1+\sqrt{c})^4}  \sum_{t=1}^K [\vv_r]_t[\vv_s]_t\vv_r^*\mathbf{\Sigma}_t\vv_s.
\end{equation*}
\end{proposition}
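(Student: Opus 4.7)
The plan is to reduce Proposition \ref{Prop-separation} to Proposition \ref{T-gen} via the elementary factorization
\begin{equation*}
\La_r - \frac{\rho_r\topp N}{(\sqrt{M}+\sqrt{N})^2}
  = \frac{\la_r^2 - \rho_r\topp N}{(\sqrt{M}+\sqrt{N})^2}
  = \frac{\la_r + \sqrt{\rho_r\topp N}}{(\sqrt{M}+\sqrt{N})^2}\,
     \bigl(\la_r - \sqrt{\rho_r\topp N}\bigr).
\end{equation*}
Proposition \ref{T-gen} already tells us the joint distributional limit of $\bigl(\la_r - \sqrt{\rho_r\topp N}\bigr)_{r=1}^K$, so it remains to show that the multiplicative prefactor converges in probability to an explicit deterministic constant and then to apply Slutsky.

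For the prefactor, Lemma \ref{L:P(Omega)} gives $\la_r/\sqrt{MN} \to \sqrt{\gamma_r}$ in probability, and by assumption $\rho_r\topp N/(MN) \to \gamma_r$, hence $\la_r + \sqrt{\rho_r\topp N} \sim 2\sqrt{\gamma_r MN}$ in probability. Using \eqref{eq:M/N}, $\sqrt{MN}/N \to \sqrt{c}$ and $(\sqrt{M}+\sqrt{N})^2/N \to (1+\sqrt{c})^2$, so that
\begin{equation*}
\frac{\la_r + \sqrt{\rho_r\topp N}}{(\sqrt{M}+\sqrt{N})^2}
\;\xrightarrow{\;\Pr\;}\;
\kappa_r := \frac{2\sqrt{c\,\gamma_r}}{(1+\sqrt{c})^2}.
\end{equation*}
Since this limit is nonrandom and the vector $(\la_r - \sqrt{\rho_r\topp N})_{r=1}^K$ converges jointly in distribution to $(m_r+\zeta_r)_{r=1}^K$ by Proposition \ref{T-gen}, Slutsky's theorem (applied coordinatewise with a common vector of scaling factors) yields
\begin{equation*}
\Bigl(\La_r - \tfrac{\rho_r\topp N}{(\sqrt{M}+\sqrt{N})^2}\Bigr)_{r=1}^K
  \toD
  \bigl(\kappa_r(m_r+\zeta_r)\bigr)_{r=1}^K.
\end{equation*}

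The last step is purely algebraic: verify that $\tilde m_r = \kappa_r m_r$ and $\E(Z_rZ_s) = \kappa_r\kappa_s\,\E(\zeta_r\zeta_s)$ match \eqref{m_r+} and \eqref{zeta-cov+}. Multiplying the first term of \eqref{m_r} by $\kappa_r$ gives $\frac{2}{(1+\sqrt{c})^2}\sum_t[\vv_r]_t^2(\pi_t-\pi_{t,t})$, and multiplying the second term of \eqref{m_r} by $\kappa_r$ produces $\frac{8c}{\gamma_r(1+\sqrt{c})^2}\sum_t \tfrac{c_t}{c}\vv_r^*\mathbf{\Sigma}_t\vv_r$, exactly \eqref{m_r+}. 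For the covariance, $\kappa_r\kappa_s = \frac{4c\sqrt{\gamma_r\gamma_s}}{(1+\sqrt{c})^4}$, and multiplying \eqref{zeta-cov} by this factor cancels the $\sqrt{\gamma_r\gamma_s}$ in the denominator and yields \eqref{zeta-cov+}.

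There is really no hard step here; the proof is a straightforward transfer of the singular-value CLT to its square. The only point requiring a modicum of care is making sure the convergence of the prefactor is in probability (not merely almost sure) so that Slutsky applies to the joint limit, and keeping the asymptotic constants consistent between $\sqrt{MN}$, $(1+\sqrt{c})^2$ and the occurrences of $\gamma_r$ so that the shifts $\tilde m_r$ and covariances $\E(Z_rZ_s)$ come out in the claimed closed form.
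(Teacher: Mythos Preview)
Your proof is correct and follows essentially the same route as the paper: both factor $\La_r-\rho_r\topp N/(\sqrt M+\sqrt N)^2$ as $(\la_r-\sqrt{\rho_r\topp N})$ times a prefactor, use Lemma~\ref{L:P(Omega)} and $\rho_r\topp N/(MN)\to\gamma_r$ to show the prefactor tends in probability to $2\sqrt{c\gamma_r}/(1+\sqrt c)^2$, and then invoke Slutsky together with Proposition~\ref{T-gen}. Your version is more explicit in checking that $\tilde m_r=\kappa_r m_r$ and $\E(Z_rZ_s)=\kappa_r\kappa_s\E(\zeta_r\zeta_s)$ reproduce \eqref{m_r+} and \eqref{zeta-cov+}, which the paper leaves to the reader.
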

\begin{proof}
 $$\La_r-\frac{\rho_r^2}{(\sqrt{M}+\sqrt{N})^2}=(\la_r- {\rho_r})\times \frac{\la_r+ {\rho_r}}{\sqrt{MN}}\times
\frac{\sqrt{MN}}{(\sqrt{M}+\sqrt{N})^2}.$$
Since
$$\frac{\la_r+ {\rho_r}}{\sqrt{MN}}\to 2 {\gamma_r} \mbox{ in probability,  and }  \frac{\sqrt{MN}}{(\sqrt{M}+\sqrt{N})^2}\to
\frac{\sqrt{c}}{(1+\sqrt{c})^2},
$$
see Lemma \ref{L:P(Omega)}, the result follows.
\end{proof}

\subsubsection{Numerical illustration}\label{Sec:NumIll}
As an illustration of Theorem \ref{T-gen}, we re-analyze the example from \cite[Section 3.1]{bryc2013separation}. In that example,
the subpopulation sample sizes were drawn with proportions
$c_1=c/6$, $c_2=c/3$, $c_3=c/2$ where $c=M/N$ varied from case to case. The theoretical population proportions $p_r(j)$ at  each location
for each subpopulation were selected from the same  site frequency spectrum
   $\psi(x)=1/(2\sqrt{x})$.
Following  \cite[Section 3.1]{bryc2013separation}, for our simulations we  selected $p_1(j), p_2(j),p_3(j)$ independently at each location $j$,
which corresponds to joint allelic spectrum  $\psi(x,y,z)=\psi(x)\psi(y)\psi(z)=1/(8\sqrt{xyz})$  in \eqref{spectrum}.

In this setting, we can explicitly compute the theoretical matrix of moments \eqref{spectrum}
     and   matrix  $\mQ$  defined by \eqref{Assume-q-lim}: %
$$
[\pi_{r,s}]=\left[
\begin{array}{ccc}
 {1}/{5} & {1}/{9} & {1}/{9} \\
 {1}/{9} & {1}/{5} & {1}/{9} \\
 {1}/{9} & {1}/{9} & {1}/{5} \\
\end{array}\right],\;
\mQ=\frac{4}{c}\left[\sqrt{c_rc_s}\,\pi_{r,s}\right]=
\left[
\begin{array}{ccc}
 \frac{2}{15} & \frac{2 \sqrt{2}}{27} & \frac{2}{9 \sqrt{3}} \\  \\
 \frac{2 \sqrt{2}}{27} & \frac{4}{15} & \frac{2 \sqrt{2} }{9\sqrt{3}} \\    \\
 \frac{2}{9 \sqrt{3}} & \frac{2 \sqrt{2} }{9\sqrt{3}} & \frac{2}{5} \\
\end{array}
\right]
=
\left[
\begin{array}{ccc}
 0.133333 & 0.104757 & 0.1283 \\
 0.104757 & 0.266667 & 0.181444 \\
 0.1283 & 0.181444 & 0.4 \\
\end{array}
\right]
$$
(Due to change of notation, this  matrix and the eigenvalues are  $4$ times    the corresponding values from \cite[page 37]{bryc2013separation}.)
The eigenvalues of the above matrix    $\mQ$ are
$$[\gamma_1^2,\gamma_2^2,\gamma_3^2]= [ 0.586836, 0.141985, 0.0711794]$$ and the corresponding eigenvectors are
$$
\vv_1= \begin{bmatrix}
 0.342425 \\
 0.545539 \\
 0.764939
\end{bmatrix},\; \vv_2=\begin{bmatrix}
 -0.154523 \\
 -0.770372 \\
 0.618586
\end{bmatrix},\; \vv_3=\begin{bmatrix}
 -0.926751 \\
 0.33002 \\
 0.179496
\end{bmatrix}.
$$
In order to apply the formulas, we use \eqref{spectrum} to compute
$$\pi_{r,s,t}=\begin{cases}
1/27 & r\ne s \ne t \\
1/15 & \mbox{ if one pair of indexes is repeated}\\
 1/7 & r=s=t
\end{cases}$$
and
$$\pi_{r,s,t,t}=\begin{cases}
1/45 & r\ne s \ne t \\
1/25 &r=s\ne t \\
1/21 & r\ne s=t \\
 1/9 & r=s=t
\end{cases}$$
As an intermediate step towards \eqref{Sigma_k}, it is convenient to collect the above data into three auxiliary matrices
$$[\pi_{rs1}-\pi_{rs11}]_{r,s}=\left[
\begin{array}{ccc}
 \frac{2}{63} & \frac{2}{105} & \frac{2}{105} \\  \\
 \frac{2}{105} & \frac{2}{75} & \frac{2}{135} \\    \\
 \frac{2}{105} & \frac{2}{135} & \frac{2}{75} \\
\end{array}
\right]$$
$$[\pi_{rs2}-\pi_{rs22}]_{r,s}=\left[
\begin{array}{ccc}
 \frac{2}{75} & \frac{2}{105} & \frac{2}{135} \\  \\
 \frac{2}{105} & \frac{2}{63} & \frac{2}{105} \\    \\
 \frac{2}{135} & \frac{2}{105} & \frac{2}{75} \\
\end{array}
\right]$$
$$[\pi_{rs3}-\pi_{rs33}]_{r,s}=
\left[
\begin{array}{ccc}
 \frac{2}{75} & \frac{2}{135} & \frac{2}{105} \\   \\
 \frac{2}{135} & \frac{2}{75} & \frac{2}{105} \\  \\
 \frac{2}{105} & \frac{2}{105} & \frac{2}{63} \\
\end{array}
\right]$$
From \eqref{Sigma_k} we get
$$
  \mathbf{\Sigma}_1=\left[
\begin{array}{ccc}
 \frac{1}{189} & \frac{\sqrt{2}}{315} & \frac{1}{105 \sqrt{3}} \\
 \frac{\sqrt{2}}{315} & \frac{2}{225} & \frac{\sqrt{2}}{135\sqrt{3}} \\
 \frac{1}{105 \sqrt{3}} & \frac{\sqrt{2}}{135\sqrt{3}}& \frac{1}{75} \\
\end{array}
\right]=\left[
\begin{array}{ccc}
 0.00529101 & 0.00448957 & 0.00549857 \\
 0.00448957 & 0.00888889 & 0.00604812 \\
 0.00549857 & 0.00604812 & 0.0133333 \\
\end{array}
\right]
$$
$$
   \mathbf{\Sigma}_2=\left[
\begin{array}{ccc}
 \frac{1}{189} & \frac{\sqrt{2}}{315} & \frac{1}{105 \sqrt{3}} \\
 \frac{\sqrt{2}}{315} & \frac{2}{225} & \frac{\sqrt{2}}{135\sqrt{3}}\\
 \frac{1}{105 \sqrt{3}} & \frac{\sqrt{2}}{135\sqrt{3}} & \frac{1}{75} \\
\end{array}
\right]=\left[
\begin{array}{ccc}
 0.00444444 & 0.00448957 & 0.00427667 \\
 0.00448957 & 0.010582 & 0.00777616 \\
 0.00427667 & 0.00777616 & 0.0133333 \\
\end{array}
\right]
  $$
  $$
    \mathbf{\Sigma}_3=\left[
\begin{array}{ccc}
 \frac{1}{225} & \frac{\sqrt{2}}{405} & \frac{1}{105 \sqrt{3}} \\
 \frac{\sqrt{2}}{405} & \frac{2}{225} &  \frac{\sqrt{2}}{105\sqrt{3}} \\
 \frac{1}{105 \sqrt{3}} & \frac{\sqrt{2}}{105\sqrt{3}} & \frac{1}{63} \\
\end{array}
\right]=\left[
\begin{array}{ccc}
 0.00444444 & 0.00349189 & 0.00549857 \\
 0.00349189 & 0.00888889 & 0.00777616 \\
 0.00549857 & 0.00777616 & 0.015873 \\
\end{array}
\right]
$$
Using these expressions and \eqref{m_r} with $c=M/N=120/2500$, we determine
$$
[m_1,m_2,m_3]=
[0.183948 \sqrt{c}+\frac{0.174053}{\sqrt{c}},0.323598 \sqrt{c}+\frac{0.353849}{\sqrt{c}},0.47449 \sqrt{c}+\frac{0.49976}{\sqrt{c}}]=[0.834739, 1.68599, 2.38504].
$$
We note that the shift is stronger for smaller singular values and is more pronounced for rectangular matrices  with  small (or large) $c$.

Finally, we compute the covariance matrix
\begin{equation*}
  \label{Cov-Th}
  [\E\zeta_r\zeta_s]=\left[
\begin{array}{ccc}
 0.306317 & 0.0293619 & 0.0225604 \\
 0.0293619 & 0.233577 & -0.00941692 \\
 0.0225604 & -0.00941692 & 0.235559 \\
\end{array}
\right].
\end{equation*}
The following figure illustrates that normal approximation works well.

\pgfdeclareimage[width=5in]{BBS}{BBS3.1}
\begin{figure}[H]
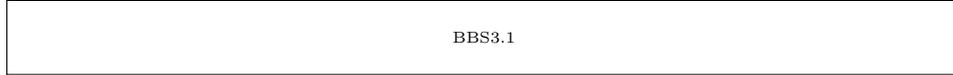

\pgfuseimage{BBS}
\caption{ Histograms of three largest singular values
 with normal curves centered at
$[{\rho_1}+m_1, {\rho_2}+m_2, {\rho_3}+m_3]=[413.47,  208.02,  145.44]$
 with variances $0.3063,    0.2336,    0.2356$.
 Based on 10,000 runs of simulations after a single randomization to choose  vectors $\vp_1,\vp_2,\vp_3$ with $M=120, N=2500$. \label{F2}
}

\end{figure}

Based on simulations reported in Fig. \ref{F2}, we conclude that normal approximation which uses the singular values of $\widetilde\mR_0$ provides  a reasonable fit.

Let us now turn to the question of  asymptotic normality for the normalized squares of singular values
  $\La_r=\la_r^2/(\sqrt{M}+\sqrt{N})^2$.
 With $M=120, N=2500$ (correcting the misreported values)
Ref.
 \cite{bryc2013separation} reported the
 observed values
 $ (  48.2,   11.5,    5.8 )$  for  $\La_1,\La_2,\La_3$ versus the
 ``theoretical estimates" $(  47.4, 11.5, 5.7 )$ calculated as
 $ \gamma_r ^2M N/(\sqrt{M}+\sqrt{N})^2$ in our notation. While the numerical differences are small,
 Proposition \ref{Prop-separation} restricts
  the accuracy of such estimates due to their dependence on  the eigenvalues  of $\widetilde\mR_0$  constructed from
   vectors of   allelic probabilities $\vp_r$.
 Figure \ref{F3E} illustrates  that
 different
selections of such vectors from the same joint allelic spectrum  \eqref{spectrum}
may yield quite different ranges for $\La_1,\La_2,\La_3$.
It is perhaps worth pointing out that {different choices of allelic probabilities affect only the centering;} the variance of normal approximation depends only on the allelic spectrum.

  \pgfdeclareimage[width=3in]{Appl1}{SNP_appl1}
\pgfdeclareimage[width=3in]{Appl2}{SNP_appl2}

\begin{figure}[H]
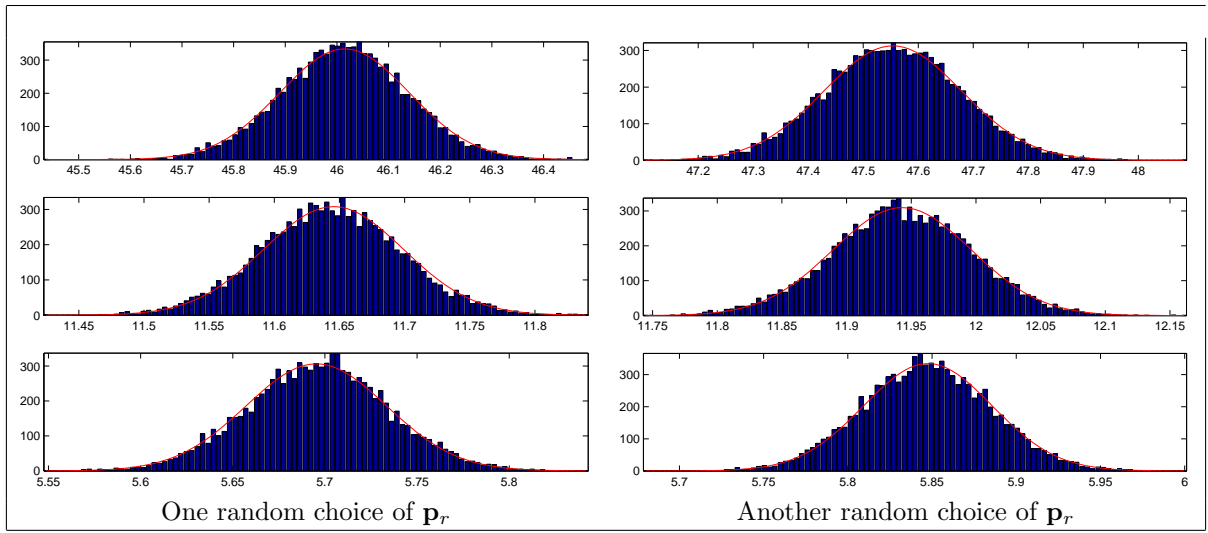

\begin{tabular}{|cc|}
\hline \\
\pgfuseimage{Appl1} &     \pgfuseimage{Appl2}  \\
 One random choice of $\vp_r$  & Another random choice of $\vp_r$   \\ \hline
\end{tabular}
\caption{Two  histograms of normalized squared singular values $(\La_1,\La_2,\La_3)$, based on 10000 simulations,
 and the theoretical normal curves from Proposition \ref{Prop-separation} drawn in red.
  This is $M=120$ individuals with $N=2500$ markers.
  Although the numerical differences between {$\La_1,\La_2,\La_3$ on} the left-hand-side and on the right-hand side are small,
  the histograms have practically disjoint supports.
  \label{F3E}}

\end{figure}

\subsection*{Acknowledgement} {
We thank an anonymous referee for suggestions that lead to a stronger form of Theorem 1.1.
}


\end{document}